\documentclass[a4paper]{revtex4}
\usepackage{amsmath, amssymb, amsthm}

\usepackage[dvipdfmx]{graphicx}

\newtheorem{thm}{Theorem}
\newtheorem{lem}{Lemma}
\newtheorem{cor}{Corollary}
\newtheorem{prop}{Proposition}
\newtheorem{dfn}{Definition}

\theoremstyle{remark}
\newtheorem*{rem*}{Remark}

\newcommand{\avg}[1]{E\left[#1\right]} 
\DeclareMathOperator{\Var}{Var}
\newcommand{\var}[1]{\Var\left[#1\right]} 
\newcommand{\GaussF}[4]{F\left(#1, #2, #3; #4\right)} 
\newcommand{\floor}[1]{\lfloor#1\rfloor} 

\begin{document}
\title{Central limit theorem for the Horton-Strahler bifurcation ratio of general branch order}
\author{Ken Yamamoto}
\affiliation{Department of Physics and Earth Sciences, Faculty of Science, University of the Ryukyus, 1 Sembaru, Nishihara, Okinawa 903--0213, Japan}

\begin{abstract}
{\centering\textbf{Abstract}\\}
The Horton-Strahler ordering method, originating in hydrology, formulates the hierarchical structure of branching patterns using a quantity called the bifurcation ratio.
The main result of this paper is the central limit theorem for bifurcation ratio of general branch order.
This is a generalized form of the central limit theorem for the lowest bifurcation ratio, which was previously proved.
Some useful relations are also derived in the proofs of the main theorems.
\end{abstract}

\maketitle

\section{Introduction}\label{sec1}
Branching objects are found very widely~\cite{Ball}, ranging from natural patterns like river networks, plants, and dendritic crystals, to conceptual expressions like binary search trees in computer science~\cite{Knuth} and phylogenetic trees in taxonomy~\cite{Archibald}.
The topological structure of a branching pattern is modeled by a binary tree if a segment bifurcates (does not trifurcate or more) at every branching point.

Let $\Omega_n$ denote the set of the different binary trees having $n$ leaves.
The number of leaves is called the \textit{magnitude} in research of branching patterns.
As known well~\cite{Stanley}, the number of the different binary trees of magnitude $n$ is given by
\[
|\Omega_n|=\frac{1}{2n-1}\binom{2n-1}{n}=\frac{(2n-2)!}{n!(n-1)!},
\]
which iscalled the $n-1$st Catalan number.
In Fig.~\ref{fig2}, $\Omega_n$ for $n=2,3$, and 4 are schematically shown.
Introducing the uniform probability measure $P_n$ on $\Omega_n$ (so that each binary tree is assigned equal probability $1/|\Omega_n|$),
we obtain the probability space $(\Omega_n, P_n)$ referred to as the \textit{random model}~\cite{Shreve}.
The formation of real-world branching patterns more or less involves stochastic effects, and the random model is a kind of mathematical simplification of such random factors.

In hydrology, methods for measuring the hierarchical structure of a river network have been proposed by Horton~\cite{Horton}, Strahler~\cite{Strahler}, Shreve~\cite{Shreve}, Tokunaga~\cite{Tokunaga}, and other researchers.
Their methods define how to assign an integer number (called the \textit{order}) to each stream.
Among all, Strahler's method is currently the most popular because of its simple computation rule.
Strahler's method is a refinement of Horton's method, so it is sometimes called the \textit{Horton-Strahler ordering method}.
The Horton-Strahler method recursively defines the order of each node by the following rules.
(i) The leaf nodes are defined to have order one.
(ii) A node whose children have different order $r_1$ and $r_2$ ($r_1\ne r_2$) has order $\max\{r_1, r_2\}$.
(iii) A node whose two children have the same order $r$ has order $r+1$.
We define a \textit{branch} of order $r$ as a maximal connected path made by nodes of equal order $r$.
(A branch here is called a \textit{stream} in the analysis of river networks.)
An example of Strahler's ordering is shown in Fig.~\ref{fig1}.
For a binary tree $\tau\in\Omega_n$, we let $S_{r,n}(\tau)$ denote the number of branches of order $r$ in $\tau$.
By the definition of the order, $S_{1,n}(\tau)=n$ and $0\le S_{r,n}(\tau)\le n/2^{r-1}$ ($r\ge2$).
Note that $S_{2,n}(\tau)\ne0$ if $n\ge2$, because a node of order 2 is produced by the merge of two leaves.
For the binary tree $\tau(\in\Omega_6)$ in Fig.~\ref{fig1}, $S_{1,6}(\tau)=6$, $S_{2,6}(\tau)=2$, $S_{3, 6}(\tau)=1$, and $S_{r, 6}(\tau)=0$ for $r\ge4$.
$S_{r,n}$ is a random variable on $(\Omega_n, P_n)$, and its stochastic property is of main interest in this study.

\begin{figure*}
\includegraphics[clip]{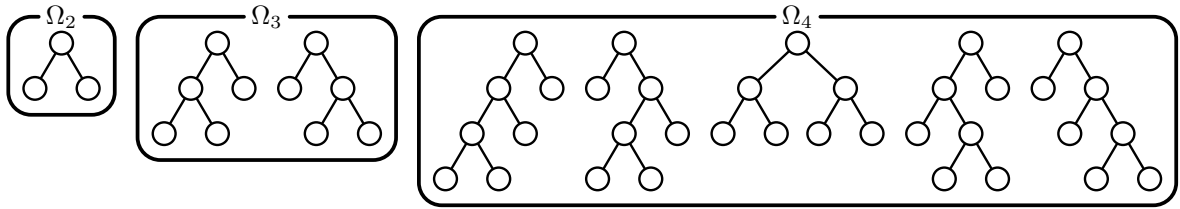}
\caption{
$\Omega_2, \Omega_3$, and $\Omega_4$ contain one, two, and five binary trees, respectively.
}
\label{fig2}
\end{figure*}

\begin{figure}
\includegraphics[clip]{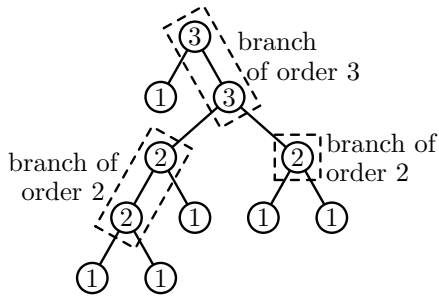}
\caption{
A small example of ordering and branches.
The number on each node represents the order of the node.
The branches of order 2 and 3 are shown by the dashed rectangles.
This binary tree consists of six branches of order 1, two branches order 2, and one branch of order 3.
}
\label{fig1}
\end{figure}

For any function $f:\{0,1,2,\ldots\}\to\mathbb{R}$, $f(S_{r,n}(\cdot))$ is a real-valued random variable on $\Omega_n$.
According to Ref.~\cite{Yamamoto2010}, the recursive relation between the averages of the $r$th and $r-1$st variables
\begin{equation}
\avg{f(S_{r,n})}
=\frac{n!(n-1)!(n-2)!}{(2n-2)!}\sum_{m=1}^{\floor{n/2}} \frac{2^{n-2m}}{(n-2m)!m!(m-1)!}\avg{f(S_{r-1,m})}
\label{eq:1-1}
\end{equation}
holds, where $\avg{\cdot}$ denotes the average on the random model.
The coefficient
\[
\frac{n!(n-1)!(n-2)!2^{n-2m}}{(2n-2)!(n-2m)!m!(m-1)!}
\]
represents the probability $P_n(S_{2,n}=m)$.
In particular, putting $r=2$ in Eq.~\eqref{eq:1-1}, we have
\begin{equation}
\avg{f(S_{2,n})}=\frac{n!(n-1)!(n-2)!}{(2n-2)!}\sum_{m=1}^{\floor{n/2}}\frac{2^{n-2m}}{(n-2m)!m!(m-1)!}f(m).
\label{eq2}
\end{equation}

Mathematical properties of $S_{2,n}$ have been investigated thoroughly.
For instance, the average and variance are respectively given by~\cite{Werner}
\begin{equation}
\avg{S_{2,n}} = \frac{n(n-1)}{2(2n-3)},\quad
\var{S_{2,n}} = \frac{n(n-1)(n-2)(n-3)}{2(2n-3)^2(2n-5)}.
\label{eq:Werner}
\end{equation}
Moreover, from Eq.~\eqref{eq2}, the moment generating function $M_{2,n}(t)$ of $S_{2,n}$ is given by
\[
M_{2,n}(t):=\avg{\exp(S_{2,n}t)}
=\frac{n!(n-1)!(n-2)!}{(2n-2)!}\sum_{m=1}^{\floor{n/2}}\frac{2^{n-2m}}{(n-2m)!m!(m-1)!}e^{mt},
\]
and this summation can be expressed using the Gauss hypergeometric function $F$~\cite{Yamamoto2008}:
\begin{equation}
M_{2,n}(t)=\frac{2^{n-2}n!(n-1)!}{(2n-2)!} e^t \GaussF{\frac{2-n}{2}}{\frac{3-n}{2}}{2}{e^t}. 
\label{eq:1-2}
\end{equation}

The ratio $S_{r+1,n}(\tau)/S_{r,n}(\tau)$ is called the \textit{bifurcation ratio} of order $r$ or simply the $r$th bifurcation ratio.
Hydrologists have empirically confirmed that the bifurcation ratios of an actual river network become almost constant for different orders,
and this relation is referred to as \textit{Horton's law of stream numbers}.
By definition, the bifurcation ratio is always smaller than or equal to $1/2$.
When $S_{r,n}(\tau)=0$, we reasonably define $S_{r+1,n}(\tau)/S_{r,n}(\tau)=0$.
The random variable $S_{r+1,n}/S_{r,n}$ is also called the $r$the bifurcation ratio.
The lowest bifurcation ratio $S_{2,n}/S_{1,n}=S_{2,n}/n$ is relatively easy to deal with, because it is similar to $S_{2,n}$.
The central limit theorem for $S_{2,n}/n$ has been shown by Wang and Waymire~\cite{Wang}:
\begin{thm}[Central limit theorem for the lowest bifurcation ratio]\label{thm1}
On the random model,
\[
\sqrt{n}\left(\frac{S_{2,n}}{n}-\frac{1}{4}\right)\Rightarrow N\left(0,\frac{1}{16}\right),
\quad n\to\infty,
\]
where ``$\Rightarrow$'' denotes convergence in distribution, and $N(\mu, \sigma^2)$ is the normal distribution with mean $\mu$ and variance $\sigma^2$.
\end{thm}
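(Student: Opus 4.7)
The natural strategy is the method of moment generating functions, exploiting the closed-form expression in Eq.~\eqref{eq:1-2}. By L\'evy's continuity theorem (or, more precisely, Curtiss's theorem for MGFs), it is enough to verify that $\avg{\exp(tZ_n)}\to\exp(t^2/32)$ for $t$ in a neighbourhood of $0$, where $Z_n:=\sqrt{n}(S_{2,n}/n-1/4)$ and $\exp(t^2/32)$ is the MGF of $N(0,1/16)$. Writing $Z_n=S_{2,n}/\sqrt{n}-\sqrt{n}/4$ yields $\avg{\exp(tZ_n)}=\exp(-t\sqrt{n}/4)\,M_{2,n}(t/\sqrt{n})$, so the problem reduces to a careful asymptotic expansion of the right-hand side. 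As a sanity check, Eq.~\eqref{eq:Werner} already yields $\avg{Z_n}\to 0$ and $\var{Z_n}=\var{S_{2,n}}/n\to 1/16$, confirming that the centering and scaling proposed in the theorem are the correct ones.

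The Catalan-type prefactor $2^{n-2}n!(n-1)!/(2n-2)!$ appearing in Eq.~\eqref{eq:1-2} is handled cleanly by Stirling's formula, giving an equivalent of order $\sqrt{\pi n^{3}}\,2^{-n}$. What is genuinely nontrivial is the asymptotic behaviour of the Gauss hypergeometric factor $\GaussF{(2-n)/2}{(3-n)/2}{2}{e^{t/\sqrt{n}}}$. This is the main obstacle: both upper parameters tend to $-\infty$ while the argument tends to $1$, so neither classical fixed-parameter nor fixed-argument asymptotics apply out of the box.

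To attack this, the most transparent route is to exploit the observation that exactly one of $(2-n)/2$ and $(3-n)/2$ is a non-positive integer (depending on the parity of $n$), so the hypergeometric series terminates; the resulting finite sum is essentially the one already displayed in Eq.~\eqref{eq2}. I would then analyse the sum by Laplace's method: the mass function $P_n(S_{2,n}=m)$ is unimodal with peak near $m\approx n/4$, and substituting $m=n/4+u\sqrt{n}/4$ and applying Stirling's formula termwise should produce a Gaussian local-limit profile of width $\sqrt{n}/4$, directly compatible with $N(0,1/16)$ after the $\sqrt{n}$-scaling. An alternative is to first apply the Euler transformation $F(a,b;c;z)=(1-z)^{c-a-b}F(c-a,c-b;c;z)$, which shifts the upper parameters to $(n+2)/2$ and $(n+1)/2$ and introduces a factor $(1-e^{t/\sqrt{n}})^{(2n-1)/2}$ that largely absorbs the centering $\exp(-t\sqrt{n}/4)$; the remaining hypergeometric can then be expanded near $z=1$ using classical techniques.

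Combining the three asymptotic pieces---the centering $\exp(-t\sqrt{n}/4)$, the Stirling-type prefactor, and the hypergeometric estimate---one expects the leading exponentially large and small factors to cancel, leaving the finite Gaussian MGF $\exp(t^2/32)$. The bookkeeping of the $O(1/\sqrt{n})$ corrections is lengthy but mechanical; the only conceptually delicate step is the hypergeometric asymptotics in the described double-limit regime. Pointwise convergence of the MGFs on an interval around $0$ then delivers the claimed convergence in distribution.
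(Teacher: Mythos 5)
Your strategy is sound but genuinely different from the paper's. The paper proves Theorem~\ref{thm1} (at the end of \S\ref{sec3}) by the method of moments: it establishes the asymptotics of every central moment $\avg{(S_{2,n}-n/4)^k}$ (Lemma~\ref{lem3}, proved by induction on $k$ via the recursion of Prop.~\ref{prop1}, which itself comes from differentiating the hypergeometric form \eqref{eq:1-2} of the MGF), and then sums the characteristic-function series termwise, discarding the odd-power terms which are $o(1)$. You instead attack the MGF at argument $t/\sqrt{n}$ directly, via a local-limit/Laplace analysis of the explicit mass function $P_n(S_{2,n}=m)$ in Eq.~\eqref{eq2}. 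Your heuristics check out: the ratio $P_n(m+1)/P_n(m)=(n-2m)(n-2m-1)/(4m(m+1))$ peaks at $m\approx n/4$, the second log-difference there is $\approx -16/n$, giving a Gaussian profile of variance $n/16$, and completing the square reproduces $\exp(t^2/32)$; your Stirling estimate of the prefactor is also exactly right. What each approach buys: yours is self-contained and arguably more elementary (it needs only Stirling plus tail control of a single explicit sum), and as a by-product gives a local limit theorem, which is stronger than the CLT; the paper's moment route is chosen because it is the one that generalizes — for $r\ge 2$ there is no closed-form distribution of $S_{r,n}$ to which Laplace's method could be applied, and the recursion \eqref{eq:1-1} propagates moment asymptotics but not pointwise distributional asymptotics. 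To make your argument complete you must supply the uniform error bounds in the local approximation and show the tails $|m-n/4|\gg\sqrt{n}$ contribute $o(1)$ to $e^{-t\sqrt{n}/4}M_{2,n}(t/\sqrt{n})$; this is routine but is where all the real work lies. One genuine caveat: your proposed Euler transformation $F(a,b;c;z)=(1-z)^{c-a-b}F(c-a,c-b;c;z)$ is problematic for $t>0$, since then $z=e^{t/\sqrt{n}}>1$, the factor $(1-z)^{(2n-1)/2}$ sits on the branch cut, and the transformed (non-terminating) series diverges; stick to the terminating-sum/Laplace route, which works for all real $t$.
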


It is a simple and natural idea that we extend Theorem~\ref{thm1} to general order $r$.
Compared with $S_{2,n}$, however, higher-order branches $S_{r,n}$ for $r\ge3$ and the bifurcation ratio of order $r\ge2$ is difficult to handle and less studied.
In this paper, we generalize Theorem~\ref{thm1} in two ways (Theorems~\ref{thm2} and \ref{thm3} in \S\ref{sec2}), and further generalize them (Theorem~\ref{thm4} in \S\ref{sec6}).
In \S\ref{sec3}--\ref{sec5}, we give proofs of lemmas, which are necessary for the main theorems.
In these proofs, Eq.~\eqref{eq:1-1} and its variant
\begin{equation}
\avg{f\left(\frac{S_{r+1,n}}{S_{r,n}}\right)}
=\frac{n!(n-1)!(n-2)!}{(2n-2)!}\sum_{m=1}^{\floor{n/2}} \frac{2^{n-2m}}{(n-2m)!m!(m-1)!}\avg{f\left(\frac{S_{r,m}}{S_{r-1,m}}\right)}.
\label{eq:1-3}
\end{equation}
are very useful.

\section{Main results}\label{sec2}
The following two theorems are the main results of the present paper.
\begin{thm}[Central limit theorem for the bifurcation ratio of general order]\label{thm2}
For any order $r=1,2,3,\ldots$, the $r$th bifurcation ratio $S_{r+1,n}/S_{r,n}$ satisfies
\begin{equation}
\sqrt{n}\left(\frac{S_{r+1,n}}{S_{r,n}}-\frac{1}{4}\right)\Rightarrow N\left(0,4^{r-3}\right),
\quad n\to\infty,
\label{eq:thm2}
\end{equation}
\end{thm}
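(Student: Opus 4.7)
The plan is to induct on $r$, with the base case $r=1$ supplied by Theorem~\ref{thm1}. For the inductive step, I rely on the recursive identity~\eqref{eq:1-3}, whose coefficients are precisely $P_n(S_{2,n}=m)$. This expresses the law of $S_{r+1,n}/S_{r,n}$ as the $P_n(S_{2,n}=\cdot)$-mixture of the laws of $S_{r,m}/S_{r-1,m}$; equivalently, on an enlarged probability space, $S_{r+1,n}/S_{r,n}\stackrel{d}{=} S_{r,M_n}/S_{r-1,M_n}$, where $M_n$ has the distribution of $S_{2,n}$ and, conditional on $M_n=m$, the ratio $S_{r,m}/S_{r-1,m}$ is drawn independently from its law on $\Omega_m$. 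Theorem~\ref{thm1} immediately yields $M_n/n\to 1/4$ in probability, hence $\sqrt{n/M_n}\to 2$ in probability.

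Heuristically, decomposing
\[
\sqrt{n}\left(\frac{S_{r,M_n}}{S_{r-1,M_n}}-\frac{1}{4}\right)=\sqrt{\frac{n}{M_n}}\cdot\sqrt{M_n}\left(\frac{S_{r,M_n}}{S_{r-1,M_n}}-\frac{1}{4}\right),
\]
the inductive hypothesis makes the second factor converge in distribution to $N(0,4^{r-4})$, and multiplication by $\sqrt{n/M_n}\to 2$ delivers $N(0,4\cdot 4^{r-4})=N(0,4^{r-3})$, which is exactly~\eqref{eq:thm2}.

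The main obstacle is making this random-index substitution rigorous, because the inductive CLT is stated only for deterministic sample sizes. I would handle it via characteristic functions. Setting
\[
\phi_{m,n}(t):=\avgn{m}{e^{it\sqrt{n}(S_{r,m}/S_{r-1,m}-1/4)}},
\]
the independence built into the mixture representation gives
\[
\avg{e^{it\sqrt{n}(S_{r+1,n}/S_{r,n}-1/4)}}=\avg{\phi_{M_n,n}(t)}.
\]
For any deterministic sequence $m_n\to\infty$ with $m_n/n\to 1/4$, the inductive hypothesis combined with Slutsky's theorem gives the pointwise limit $\phi_{m_n,n}(t)\to e^{-t^2 4^{r-3}/2}$. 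Since $|\phi_{m,n}|\le 1$ and $M_n/n\to 1/4$ in probability, a subsequence argument (extract a sub-subsequence along which $M_{n_k}/n_k\to 1/4$ almost surely, then invoke bounded convergence) promotes this to $\avg{\phi_{M_n,n}(t)}\to e^{-t^2 4^{r-3}/2}$ for every $t$. L\'evy's continuity theorem then completes the induction.
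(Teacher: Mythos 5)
Your proposal is correct, but it takes a genuinely different route from the paper's. The paper proves Theorem~\ref{thm2} by first establishing the asymptotics of \emph{all} integer moments of $S_{r+1,n}/S_{r,n}-1/4$ (Lemma~\ref{lem1}, itself proved by induction on $r$ via Eq.~\eqref{eq:1-3} together with the negative-moment asymptotics $\avg{S_{2,n}^{-k}}\sim(n/4)^{-k}$ of Prop.~\ref{prop2} and the base case Cor.~\ref{cor3}), and then expands the characteristic function term by term in these moments, discarding the odd-order terms. You instead read Eq.~\eqref{eq:1-3} as an identity of laws --- the law of $S_{r+1,n}/S_{r,n}$ is the $P_n(S_{2,n}=\cdot)$-mixture of the laws of $S_{r,m}/S_{r-1,m}$ --- and run the induction directly at the distributional level via an Anscombe-type random-index argument: Slutsky's theorem handles deterministic index sequences $m_n/n\to 1/4$ (the pointwise convergence $\phi_{m_n,n}(t)\to e^{-t^2 4^{r-3}/2}$ is exactly the statement that $\sqrt{n/m_n}\cdot\sqrt{m_n}(S_{r,m_n}/S_{r-1,m_n}-1/4)\Rightarrow N(0,4^{r-3})$, or equivalently follows from the local uniformity of convergence of characteristic functions to a continuous limit), and the sub-subsequence plus bounded-convergence device legitimately upgrades this to the random index $M_n\sim S_{2,n}$, since $S_{2,n}/n\to 1/4$ in probability is immediate from Theorem~\ref{thm1}. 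Every ingredient you use is available and every step is sound. What your route buys: it bypasses the entire moment machinery (Prop.~\ref{prop1}, Lemma~\ref{lem3}, Prop.~\ref{prop2}, Lemma~\ref{lem1}) and also avoids the interchange of $n\to\infty$ with the infinite sum over $s$ in the paper's characteristic-function expansion, which the paper does not explicitly justify; in that respect your argument is self-contained and arguably tighter. What the paper's route buys: the moment asymptotics are reusable, feeding directly into Lemma~\ref{lem2}, Theorem~\ref{thm3}, and Theorem~\ref{thm4}, whereas your random-index argument does not transfer as immediately to Theorem~\ref{thm3}, where the change of centering from $n/4^r$ to $m/4^{r-1}$ makes the fluctuation of $M_n$ itself contribute to the limiting variance.
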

\begin{thm}[Central limit theorem for the number of branches of general order]\label{thm3}
For any order $r=1,2,3,\ldots$, the number $S_{r+1,n}$ of $r+1$st branches satisfies
\begin{equation}
\sqrt{n}\left(\frac{S_{r+1,n}}{n}-\frac{1}{4^r}\right)\Rightarrow N\left(0,\frac{1}{3}\frac{4^r-1}{16^r}\right),
\quad n\to\infty,
\label{eq:thm3}
\end{equation}
\end{thm}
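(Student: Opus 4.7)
The plan is induction on $r$, with base case $r=1$ supplied by Theorem~\ref{thm1} (indeed Eq.~\eqref{eq:thm3} at $r=1$ reduces to $\tfrac{1}{3}\cdot\tfrac{3}{16}=\tfrac{1}{16}$). The recursion Eq.~\eqref{eq:1-1}, read as a statement about distributions, asserts that $S_{r+1,n}\stackrel{d}{=}S_{r,M}$ where $M:=S_{2,n}$ is drawn from the random model on $\Omega_n$ and, conditional on $M$, the tree carrying $S_{r,M}$ is drawn independently from the random model on $\Omega_M$. This gives the factorization
\[
\frac{S_{r+1,n}}{n}\stackrel{d}{=}\frac{S_{r,M}}{M}\cdot\frac{M}{n},
\]
whose two factors come from genuinely independent sources of randomness given $M$.

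Setting $W_n=\sqrt{n}(M/n-1/4)$ and $Z_n=\sqrt{M}(S_{r,M}/M-4^{-(r-1)})$, Theorem~\ref{thm1} yields $W_n\Rightarrow N(0,1/16)$, while the inductive hypothesis on $\Omega_M$ (applicable since $M\to\infty$ in probability) supplies a conditional CLT $Z_n\mid M\Rightarrow N(0,\sigma_{r-1}^2)$ with $\sigma_{r-1}^2=\tfrac{1}{3}(4^{r-1}-1)/16^{r-1}$. I would upgrade these to joint convergence of $(W_n,Z_n)$ to independent Gaussians via the identity
\[
E\bigl[e^{itW_n+isZ_n}\bigr]=E\bigl[e^{itW_n}\,E[e^{isZ_n}\mid M]\bigr].
\]
The inner conditional characteristic function converges in probability to $\exp(-s^2\sigma_{r-1}^2/2)$ (pointwise CF convergence from the inductive hypothesis together with $M\to\infty$ in probability) and is bounded by $1$, so bounded convergence forces the product limit $\exp(-t^2/32)\exp(-s^2\sigma_{r-1}^2/2)$, i.e.\ asymptotic independence.

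Given this joint convergence and $\sqrt{n/M}\to 2$ in probability, Slutsky's theorem applied to the Taylor expansion
\[
\sqrt{n}\left(\frac{S_{r+1,n}}{n}-\frac{1}{4^r}\right)=4^{-(r-1)}W_n+\frac{1}{4}\sqrt{\frac{n}{M}}\,Z_n+o_P(1)
\]
yields a Gaussian limit whose variance is
\[
\frac{1}{16^{r-1}}\cdot\frac{1}{16}+\frac{1}{4}\sigma_{r-1}^2=\frac{1}{16^r}+\frac{4^{r-1}-1}{12\cdot 16^{r-1}}=\frac{4^r-1}{3\cdot 16^r}=\sigma_r^2,
\]
which is precisely Eq.~\eqref{eq:thm3} and closes the induction.

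The hard part is the random-index step: the inductive CLT is a statement about $\Omega_m$ for deterministic $m\to\infty$, whereas here $m$ is replaced by the random diverging variable $M$. Converting one to the other requires both the conditional independence of the tree on $\Omega_M$ from $M$ (furnished by the mixture interpretation of Eq.~\eqref{eq:1-1}) and a characteristic-function/bounded-convergence argument to pull the inductive CLT inside the outer expectation. Once that exchange is justified, the remainder is a routine product-rule expansion together with the algebraic check that $1/16^r+\sigma_{r-1}^2/4$ collapses to the telescoped value $\tfrac{1}{3}(4^r-1)/16^r$.
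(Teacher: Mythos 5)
Your proof is correct, but it follows a genuinely different route from the paper's. The paper proves Theorem~\ref{thm3} by the method of moments: Lemma~\ref{lem2} gives the asymptotics of every centered moment $E[(S_{r+1,n}-n/4^r)^k]$ (by induction on $r$ via Eq.~\eqref{eq:1-1}, with the auxiliary mixed-moment Lemma~\ref{lem4} needed to track the delicate cancellations in the odd moments), and the characteristic function is then obtained by summing the even-moment series and discarding the odd one. You instead read Eq.~\eqref{eq:1-1} as the distributional identity $S_{r+1,n}\stackrel{d}{=}S_{r,M}$ with $M=S_{2,n}$, split the fluctuation exactly via $ab-a_0b_0=(a-a_0)b+a_0(b-b_0)$ into a top-level term in $M/n$ and a rescaled lower-level term in $S_{r,M}/M$, and prove asymptotic independence by conditioning on $M$ inside the joint characteristic function; the resulting variance recursion $\sigma_r^2=16^{-r}+\sigma_{r-1}^2/4$ telescopes to $\tfrac{1}{3}(4^r-1)/16^r$. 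Your handling of the one genuinely delicate point — the random index — is sound: pointwise convergence of the conditional characteristic function for deterministic $m$, together with $M\to\infty$ in probability and boundedness by $1$, lets bounded convergence pull the limit through the outer expectation, and the replacement of $\sqrt{M/n}$ by $\tfrac14\sqrt{n/M}$ costs only an $o_P(1)$ error since $Z_n$ is tight. What the paper's heavier machinery buys is the complete list of asymptotic moments, including the odd ones, which are reused in Lemma~\ref{lem5} and Theorem~\ref{thm4}; what your argument buys is brevity, independence from Lemma~\ref{lem4} and the coefficient bookkeeping of Lemma~\ref{lem3}, and a transparent law-of-total-variance explanation of why the limiting variance has the form it does. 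Note also that the same random-index scheme applied to Eq.~\eqref{eq:1-3} instead of Eq.~\eqref{eq:1-1} would yield Theorem~\ref{thm2} by an analogous induction, so your method is not limited to the branch-count statement.
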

\begin{rem*}
These two theorems are generalization of Theorem~\ref{thm1} to general order $r$; they are reduced to Theorem~\ref{thm1} by setting $r=1$.
Theorem~\ref{thm2} states the property of the bifurcation ratio $S_{r+1,n}/S_{r,n}$, and Theorem~\ref{thm3} states the property of the number of branches $S_{r+1,n}$.
The limit variance $4^{r-3}$ in Theorem~\ref{thm2} becomes large as $r$ increases, whereas the limit variance in Theorem~\ref{thm3} becomes small as $r$ increases.
\end{rem*}

From Theorem~\ref{thm2}, the following property, which can be regarded as Horton's law of stream numbers, is easily derived.
\begin{cor}[Horton's law of stream numbers for the random model]\label{cor1}
For any order $r = 1, 2,\ldots$, the $r$th bifurcation ratio $S_{r+1,n}/S_{r,n}$ converges in probability to the common value $1/4$:
\[
\frac{S_{r+1, n}}{S_{r, n}}\xrightarrow{p}\frac{1}{4},
\]
where ``$\xrightarrow{p}$'' denotes convergence in probability.
\end{cor}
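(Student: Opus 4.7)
The plan is to deduce Corollary~\ref{cor1} directly from Theorem~\ref{thm2} using a standard Slutsky-type argument; no new structure of the random model is needed beyond what the theorem already encodes.

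First I would fix an arbitrary order $r\ge1$ and set $X_n:=S_{r+1,n}/S_{r,n}-1/4$. By Theorem~\ref{thm2}, $\sqrt{n}\,X_n\Rightarrow Z$, where $Z\sim N(0,4^{r-3})$. Next I would write
\[
X_n=\frac{1}{\sqrt{n}}\cdot\bigl(\sqrt{n}\,X_n\bigr).
\]
Since $1/\sqrt{n}\to0$ deterministically and $\sqrt{n}\,X_n$ converges in distribution to a tight (finite) limit, Slutsky's theorem yields $X_n\Rightarrow0$. Because the limit is a constant, convergence in distribution to it is equivalent to convergence in probability, so $X_n\xrightarrow{p}0$, i.e.\ $S_{r+1,n}/S_{r,n}\xrightarrow{p}1/4$.

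There is essentially no obstacle to overcome: once Theorem~\ref{thm2} is available, the corollary is the standard observation that a $\sqrt{n}$-rate central limit theorem automatically implies a law of large numbers for the centering constant. The only minor care needed is to note that the bifurcation ratio is well-defined (by the convention $S_{r+1,n}/S_{r,n}:=0$ when $S_{r,n}=0$ stated in the introduction), so that $X_n$ is a bona fide random variable on $(\Omega_n,P_n)$ and the limiting arguments apply without further qualification.
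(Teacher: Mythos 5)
Your argument is correct and is exactly the route the paper intends: the paper simply remarks that Corollary~\ref{cor1} is ``easily derived'' from Theorem~\ref{thm2}, and the standard Slutsky-type deduction you give (writing $X_n=n^{-1/2}\cdot\sqrt{n}X_n$ and using that convergence in distribution to a constant implies convergence in probability) is that derivation. Your added remark about the convention $S_{r+1,n}/S_{r,n}:=0$ when $S_{r,n}=0$ is a sensible bit of care that the paper leaves implicit.
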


Let us introduce the asymptotic equality, since this study mainly focuses on the asymptotic behavior (the limit $n\to\infty$) of $S_{r,n}$.
\begin{dfn}
The average value $\avg{f(S_{r,n})}$ is asymptotically equivalent to $g_r(n)$ if
\[
\lim_{n\to\infty}\frac{\avg{f(S_{r,n})}}{g_r(n)}=1,
\]
and this is denoted by
\[
\avg{f(S_{r,n})}\sim g_r(n).
\]
\end{dfn}
For example, from Eq.~\eqref{eq:Werner},
\[
\avg{S_{2,n}}=\frac{n(n-1)}{2(2n-3)}\sim\frac{n}{4}.
\]

Theorems~\ref{thm2} and \ref{thm3} are easily proved by using the following Lemmas~\ref{lem1} and \ref{lem2}, respectively.
\begin{lem}\label{lem1}
For $r=1,2,\ldots$ and $s=0,1,2,\ldots$,
\[
\avg{\left(\frac{S_{r+1,n}}{S_{r,n}}-\frac{1}{4}\right)^{2s}}
\sim \frac{(2s-1)!!}{4^{2s}}\left(\frac{n}{4^{r-1}}\right)^{-s},\quad
\avg{\left(\frac{S_{r+1,n}}{S_{r,n}}-\frac{1}{4}\right)^{2s+1}}
\sim \frac{(2s+1)!!}{2\cdot 4^{2s+1}}\left(\frac{n}{4^{r-1}}\right)^{-s-1}
\]
\end{lem}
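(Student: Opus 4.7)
The plan is to prove Lemma~\ref{lem1} by induction on the order $r$, using the recursion Eq.~\eqref{eq:1-3} to reduce the $r$-th bifurcation ratio to the $(r-1)$-th. Recognizing the weight in Eq.~\eqref{eq:1-3} as the pmf of $S_{2,n}$, the recursion applied with $f(x)=(x-1/4)^k$ reads
\[
\avg{\left(\frac{S_{r+1,n}}{S_{r,n}}-\frac{1}{4}\right)^k}
= \avg{\phi_{k,r-1}(S_{2,n})},\qquad
\phi_{k,r-1}(m) := \avg{\left(\frac{S_{r,m}}{S_{r-1,m}}-\frac{1}{4}\right)^k}.
\]
Thus, if I know (i) the inductive asymptotic of $\phi_{k,r-1}(m)$ in $m$, and (ii) the asymptotic of $\avg{S_{2,n}^{-\ell}}$ for $\ell=s$ in the even case and $\ell=s+1$ in the odd case, the inductive step becomes essentially arithmetic.

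For the base case $r=1$, I would compute the centered moments of $S_{2,n}/n$ directly from the moment generating function Eq.~\eqref{eq:1-2}. Expanding $M_{2,n}(t)$ near $t=0$ using the asymptotics of $F\bigl(\tfrac{2-n}{2},\tfrac{3-n}{2};2;e^t\bigr)$ as $n\to\infty$ shows that $\log M_{2,n}(t) = \tfrac{n}{4}t + \tfrac{n}{32}t^2 + O(1)$, from which every centered-moment asymptotic can be read off term by term. This reproduces the mean and variance of Eq.~\eqref{eq:Werner}, the odd-moment bias $\avg{S_{2,n}/n-1/4}\sim 1/(8n)$ (the $s=0$ odd case of the lemma), and all higher centered moments with the scaling claimed.

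For the inductive step, substituting the inductive hypothesis $\phi_{2s,r-1}(m)\sim \tfrac{(2s-1)!!}{4^{2s}}(m/4^{r-2})^{-s}$ (and its odd analogue with exponent $-s-1$) into $\avg{\phi_{k,r-1}(S_{2,n})}$ formally yields $\tfrac{(2s-1)!!}{4^{2s}}(4^{r-2})^{s}\avg{S_{2,n}^{-s}}$, and combining this with a preliminary lemma $\avg{S_{2,n}^{-\ell}}\sim (n/4)^{-\ell}$ delivers the claim through the identity $4^{r-2}\cdot 4 = 4^{r-1}$. The principal obstacle is justifying the passage of the asymptotic equivalence under the expectation: the map $m\mapsto m^{-\ell}$ is singular as $m\to 1$, and $S_{2,n}$ does assign small but positive mass there via Eq.~\eqref{eq2}. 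I would handle this by splitting at $m=\varepsilon n$: on $\{S_{2,n}\ge \varepsilon n\}$ the inductive bound applies uniformly, while on $\{S_{2,n}<\varepsilon n\}$ I would use the deterministic bound $|\phi_{k,r-1}(m)|\le (3/4)^k$ (coming from $|S_{r,m}/S_{r-1,m}-1/4|\le 3/4$) together with the super-polynomial tail estimate $P(S_{2,n}\le \varepsilon n) = O(n^{-N})$ for every $N$, obtained from Eq.~\eqref{eq2} via Stirling, to absorb the contribution into the error. The auxiliary asymptotic $\avg{S_{2,n}^{-\ell}}\sim (n/4)^{-\ell}$, which I would place as a preliminary lemma in \S\ref{sec3}, is proved by the same splitting together with a CLT-type expansion based on Theorem~\ref{thm1}; this estimate and the uniformity of the inductive bound in the regime $m\asymp n$ are the technically most delicate points of the proof.
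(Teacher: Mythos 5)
Your inductive skeleton is exactly the paper's: induction on $r$, with Eq.~\eqref{eq:1-3} converting the order-$r$ moment into $\avg{\phi_{k,r-1}(S_{2,n})}$, the inductive hypothesis reducing this to a constant times $\avg{S_{2,n}^{-s}}$ (or $\avg{S_{2,n}^{-s-1}}$), and the auxiliary fact $\avg{S_{2,n}^{-\ell}}\sim(n/4)^{-\ell}$ (the paper's Prop.~\ref{prop2}) closing the computation. Where you genuinely diverge is in the two supporting ingredients. For the base case $r=1$ the paper proves Lemma~\ref{lem3} by a second induction on the moment order $k$, driven by the exact recursion of Prop.~\ref{prop1} (itself a consequence of the contiguity identity \eqref{eq:derivative}); you instead propose the cumulant expansion $\log M_{2,n}(t)=\frac{n}{4}t+\frac{n}{32}t^2+O(1)$. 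That route does reproduce the lemma --- the even moments come from $\kappa_2\sim n/16$, and the odd-moment constant $\frac{(2s+1)!!}{2\cdot4^{2s+1}}$ comes entirely from the bias $\avg{S_{2,n}}-n/4\to 1/8$ multiplying the $2s$-th centered moment --- but it silently requires all cumulants of order $\ge 3$ to be $O(1)$ with enough uniformity (say, analyticity and uniform boundedness of the remainder on a fixed complex disc) to control the derivatives at $t=0$; that is a nontrivial uniform asymptotic of $\GaussF{\frac{2-n}{2}}{\frac{3-n}{2}}{2}{e^t}$ which you would have to establish, and which the paper's elementary recursion avoids entirely. For $\avg{S_{2,n}^{-\ell}}$ the paper applies the antiderivative operator $(d/dt)^{-1}$ to the same hypergeometric representation to get a recursion in $\ell$, whereas you use concentration of $S_{2,n}$ plus a super-polynomial tail bound; both work. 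Finally, your $\varepsilon n$-splitting with the crude bound $|\phi_{k,r-1}(m)|\le(3/4)^k$ (in fact $(1/4)^k$, since the ratio lies in $[0,1/2]$) is a genuine improvement in rigor: the paper substitutes the $m\to\infty$ asymptotic of $\phi_{k,r-1}(m)$ inside the sum over all $m\ge 1$ without justifying the contribution of small $m$, and your argument is precisely what is needed to make that step airtight.
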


\begin{lem}\label{lem2}
For $r=1,2,\ldots$ and $s=0,1,2,\ldots$,
\begin{align*}
\avg{\left(S_{r+1,n}-\frac{n}{4^r}\right)^{2s}}
&\sim \frac{(2s-1)!!}{4^{2sr}}\left(\frac{4^r-1}{3}\right)^s n^s,\\
\avg{\left(S_{r+1,n}-\frac{n}{4^r}\right)^{2s+1}}
&\sim \frac{(2s+1)!!}{2\cdot 4^{(2s+1)r}}\left(\frac{4^r-1}{3}\right)^s\frac{1}{5}\left(\frac{4^{r+1}-1}{3}+\frac{4^{r-1}-1}{3}4(2s+1)\right)n^s.
\end{align*}
The odd-power result has a more complicated form than the even-power one.
\end{lem}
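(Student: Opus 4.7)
The plan is to prove Lemma~\ref{lem2} by induction on the branch order $r$, using the recursion~\eqref{eq:1-1} to pass from order $r$ to $r+1$. Applied with test function $f(x)=(x-n/4^r)^k$, Eq.~\eqref{eq:1-1} gives
\begin{equation*}
\avg{(S_{r+1,n}-n/4^r)^k} = \sum_{m=1}^{\floor{n/2}} P_n(S_{2,n}=m)\,\avg{(S_{r,m}-n/4^r)^k},
\end{equation*}
and I would exploit the decomposition
\[
S_{r,m}-\frac{n}{4^r} \;=\; \underbrace{\Bigl(S_{r,m}-\frac{m}{4^{r-1}}\Bigr)}_{A_m} \;+\; \underbrace{\frac{m-n/4}{4^{r-1}}}_{B_m},
\]
so that $A_m$ is controlled by the inductive hypothesis (conditional on $m$), while $B_m$ is a deterministic rescaling of the base-case fluctuation $S_{2,n}-n/4$.

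For the base case $r=1$, I would extract the asymptotic moments of $S_{2,n}-n/4$ from the explicit moment generating function~\eqref{eq:1-2}: the shifted MGF $e^{-nt/4}M_{2,n}(t)$ admits a Taylor expansion around $t=0$ whose coefficients can be analyzed via standard asymptotic estimates for the Gauss hypergeometric function near $e^t=1$, yielding $\avg{(S_{2,n}-n/4)^{2s}}\sim (2s-1)!!\,n^s/16^s$ and $\avg{(S_{2,n}-n/4)^{2s+1}}\sim (2s+1)!!\,n^s/(2\cdot 4^{2s+1})$. These match Lemma~\ref{lem2} at $r=1$, where the factor $(4^{r-1}-1)/3$ kills the second summand in the odd-moment formula.

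For the inductive step, I would expand $(A_m+B_m)^k$ by the binomial theorem and substitute the inductive hypothesis for the conditional moments $\avg{A_m^j\mid m}$, whose leading order is a constant times $m^{\floor{j/2}}$. Since $S_{2,n}$ concentrates at $n/4$ with $O(\sqrt n)$ fluctuations, $m^{\floor{j/2}}$ may be replaced by $(n/4)^{\floor{j/2}}$ with subleading error after the outer sum, reducing every contribution to a moment of $S_{2,n}-n/4$ supplied by the base case. In the even case $k=2s$, only terms with both $j$ and $k-j$ even contribute at order $n^s$; the identity $\binom{2s}{2i}(2i-1)!!\,(2s-2i-1)!! = \binom{s}{i}\,(2s-1)!!$ collapses the binomial sum to $(2s-1)!!\,n^s\,(1+(4^r-4)/3)^s/4^{2rs} = (2s-1)!!\,n^s\,((4^r-1)/3)^s/4^{2rs}$, as claimed.

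The main obstacle is the odd case $k=2s+1$. Because the odd moments of $B_m$ are only of order $n^{s-i}$ (the same as the corresponding even moments, not $n^{s-i+1/2}$), both $j$ even and $j$ odd contribute at leading order $n^s$. The inductive coefficient for odd $A_m$-moments already carries the unwieldy prefactor $\tfrac{1}{5}\bigl(\tfrac{4^r-1}{3}+\tfrac{4^{r-2}-1}{3}\cdot 4(2i+1)\bigr)$, which must be convolved through the binomial weights $\binom{2s+1}{j}$ with the corresponding base-case coefficients of $B_m$. I expect the resulting double sum to split cleanly into a ``$j$-even'' contribution (inheriting the term $\tfrac{4^{r+1}-1}{3}$ from odd $B_m$-moments) and a ``$j$-odd'' contribution (inheriting the $\tfrac{4^{r-2}-1}{3}\cdot 4(2i+1)$ piece from the inductive $A_m$-prefactor), each collapsing by the same $\binom{2s}{2i}$-identity used in the even case followed by a single binomial summation. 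Matching the final combination to the claimed prefactor $\tfrac{1}{5}\bigl(\tfrac{4^{r+1}-1}{3}+\tfrac{4^{r-1}-1}{3}\cdot 4(2s+1)\bigr)$ reduces to a polynomial identity in $4^r$ and $s$; verifying this identity is the core combinatorial difficulty, but it is elementary once the correct grouping is isolated.
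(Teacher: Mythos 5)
Your skeleton (induction on $r$, the recursion~\eqref{eq:1-1} applied to $f(x)=(x-n/4^r)^k$, and the decomposition $S_{r,m}-n/4^r=A_m+B_m$) is exactly the paper's, and your even case $k=2s$ is correct, including the double-factorial identity that collapses the binomial sum. But there is a genuine gap in the odd case, located precisely at the step ``$m^{\floor{j/2}}$ may be replaced by $(n/4)^{\floor{j/2}}$ with subleading error.'' Writing $l=\floor{j/2}$, what the outer sum actually produces is the mixed moment $\avg{S_{2,n}^{l}(S_{2,n}-n/4)^{k-j}}$, and the replacement $S_{2,n}^l\mapsto(n/4)^l$ is harmless only when $k-j$ is even. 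When $k-j$ is odd, expand $S_{2,n}^l=\sum_p\binom{l}{p}(n/4)^{l-p}(S_{2,n}-n/4)^p$: the $p=0$ term is $(n/4)^l\,O(n^{(k-j-1)/2})$, while the $p=1$ term is $l(n/4)^{l-1}\avg{(S_{2,n}-n/4)^{k-j+1}}=O\bigl(n^{l-1}\cdot n^{(k-j+1)/2}\bigr)$, which is of the \emph{same} order --- the odd central moments of $S_{2,n}$ are anomalously small ($n^{s}$ rather than $n^{s+1/2}$), so the cross term is not subleading. The paper handles this with a separate statement (Lemma~\ref{lem4}), proved by its own induction on $l$ via Prop.~\ref{prop1}, giving $\avg{S_{2,n}^l(S_{2,n}-n/4)^{2s+1}}\sim(n/4)^l\frac{(2s+1)!!}{2\cdot4^{2s+1}}(2l+1)n^s$; the extra factor $(2l+1)$, which your replacement silently sets to $1$, is exactly what generates the coefficient $\frac15\bigl(\frac{4^{r+1}-1}{3}+\frac{4^{r-1}-1}{3}4(2s+1)\bigr)$ in the odd-power claim. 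Without it your ``$j$-even'' contribution carries the wrong constant and the final polynomial identity will not close.

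A secondary point: your base case $r=1$ is asserted rather than proved. Since $\avg{S_{2,n}^k}\sim(n/4)^k$ for every $k$, the top $\lceil k/2\rceil$ powers of $n$ cancel successively in $\avg{(S_{2,n}-n/4)^k}$, so ``standard asymptotic estimates near $e^t=1$'' must be pushed to depth $\floor{k/2}$; this is the content of the paper's Lemma~\ref{lem3}, itself proved by a careful induction on $k$ that tracks a subleading coefficient and verifies its cancellation. You need either that induction or an equally explicit hypergeometric expansion to legitimize the base case.
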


\begin{proof}[Proof of Theorem \ref{thm2}]
We let $\varphi_{r,n}^{r+1}(z)$ denote the characteristic function of the left-hand side of Eq.~\eqref{eq:thm2}, where the subscript $r$ and superscript $r+1$ respectively correspond to $S_{r,n}$ in the denominator and $S_{r+1,n}$ in the numerator in Eq.~\eqref{eq:thm2}.
By definition, $\varphi_{r,n}^{r+1}(z)$ is calculated as
\begin{align*}
\varphi_{r,n}^{r+1}(z) &= \avg{\exp\left(iz\sqrt{n}\left(\frac{S_{r+1,n}}{S_{r,n}}-\frac{1}{4}\right)\right)}\\
&=\avg{\sum_{k=0}^\infty\frac{(iz\sqrt{n})^k}{k!}\left(\frac{S_{r+1,n}}{S_{r,n}}-\frac{1}{4}\right)^k}\\
&=\sum_{k=0}^\infty\frac{(iz\sqrt{n})^k}{k!}\avg{\left(\frac{S_{r+1,n}}{S_{r,n}}-\frac{1}{4}\right)^k}\\
&=\sum_{s=0}^\infty\frac{(iz\sqrt{n})^{2s}}{(2s)!}\avg{\left(\frac{S_{r+1,n}}{S_{r,n}}-\frac{1}{4}\right)^{2s}}
+\sum_{s=0}^\infty\frac{(iz\sqrt{n})^{2s+1}}{(2s+1)!}\avg{\left(\frac{S_{r+1,n}}{S_{r,n}}-\frac{1}{4}\right)^{2s+1}},
\end{align*}
where $i=\sqrt{-1}$.
At the last equality, we have split the sum into even $k$ ($k=2s$) and odd $k$ ($k=2s+1$).
By Lemma~\ref{lem1}, the terms of the first sum (even $k$) are $O(n^0)$, whereas the terms of the second sum (odd $k$) are $o(n^0)$.
Hence, the second sum can be neglected in the limit $n\to\infty$, so that
\begin{align*}
\varphi_{r,n}^{r+1}(z)&\sim\sum_{s=0}^\infty\frac{(iz\sqrt{n})^{2s}}{(2s)!}\avg{\left(\frac{S_{r+1,n}}{S_{r,n}}-\frac{1}{4}\right)^{2s}}\\
&\sim\sum_{s=0}^\infty\frac{(-z^2n)^s}{(2s)!}\frac{(2s-1)!!}{4^{2s}}\frac{n^{-s}}{4^{-(r-1)s}}\\
&=\sum_{s=0}^\infty\left(-\frac{4^{r-3}z^2}{2}\right)^s\frac{1}{s!}\\
&=\exp\left(-\frac{4^{r-3}z^2}{2}\right).
\end{align*}
Recall that the characteristic function of $N(\mu, \sigma^2)$ is $\exp(i\mu z-\sigma^2z^2/2)$.
Since $\varphi_{r,n}^{r+1}$ converges pointwise to the characteristic function of $N(0,4^{r-3})$, convergence in distribution in Theorem~\ref{thm2} is proved.
(For the properties of a characteristic function, see Feller~\cite{Feller} for example.)

Keep in mind that the neglect of the odd-power terms is a crucial point also in the other central limit theorems in this paper.
\end{proof}

\begin{proof}[Proof of Theorem \ref{thm3}]
As with the above proof of Theorem~\ref{thm2}, the characteristic function $\varphi_{1,n}^{r+1}(z)$ of the left-hand side of Eq.~\eqref{eq:thm3} is
\begin{align*}
\varphi_{1,n}^{r+1}(z)&=\avg{\exp\left(iz\sqrt{n}\left(\frac{S_{r+1,n}}{n}-\frac{1}{4^r}\right)\right)}\\
&=\sum_{k=0}^\infty\frac{(iz\sqrt{n})^k}{k!}\avg{\left(\frac{S_{r+1,n}}{n}-\frac{1}{4^r}\right)^k}\\
&=\sum_{s=0}^\infty\frac{(iz\sqrt{n})^{2s}}{(2s)!n^{2s}}\avg{\left(S_{r+1,n}-\frac{n}{4^r}\right)^{2s}}
+\sum_{s=0}^\infty\frac{(iz\sqrt{n})^{2s+1}}{(2s+1)!n^{2s+1}}\avg{\left(S_{r+1,n}-\frac{n}{4^r}\right)^{2s+1}}.\\
\end{align*}
By Lemma~\ref{lem2}, the second sum is neglected and the dominant terms are calculated to
\begin{align*}
\varphi_{1,n}^{r+1}(z)&\sim\sum_{s=0}^\infty\frac{(iz\sqrt{n})^{2s}}{(2s)!n^{2s}}\avg{\left(S_{r+1,n}-\frac{n}{4^r}\right)^{2s}}\\
&\sim\sum_{s=0}^\infty\frac{(-z^2n)^s}{(2s)!n^{2s}}\frac{(2s-1)!!}{4^{2rs}}\left(\frac{4^r-1}{3}\right)^s n^s\\
&=\sum_{s=0}^\infty\left(-\frac{z^2}{2}\frac{4^r-1}{3\cdot16^r}\right)^s\frac{1}{s!}\\
&=\exp\left(-\frac{z^2}{2}\frac{4^r-1}{3\cdot16^r}\right).
\end{align*}
Therefore, the converges in distribution to $N(0, (4^r-1)/(3\cdot16^r))$ is proved.
\end{proof}

We give the proofs of Lemmas~\ref{lem1} and \ref{lem2} in the following three sections.

\section{Starting point of Lemmas \ref{lem1} and \ref{lem2}}\label{sec3}
We show Lemmas~\ref{lem1} and \ref{lem2} by induction on $r$.
In this section,  the case of $r=1$ in Lemmas~\ref{lem1} and \ref{lem2} is proved (Cor.~\ref{cor3}).

\begin{prop}\label{prop1}
For a two-variable polynomial $p(\cdot,\cdot)$ of finite degree,
\begin{equation}
\avg{S_{2,n}p(S_{2,n}, n)}=
\frac{n}{2}\avg{p(S_{2,n}, n)}-\frac{n(n-2)}{2(2n-3)}\avg{p(S_{2,n-1}, n)}.
\label{eq:prop1}
\end{equation}
Here, $\avg{S_{2,n}p(S_{2,n}, n)}$ and $\avg{p(S_{2,n}, n)}$ are taken over $\Omega_n$, whereas $\avg{p(S_{2,n-1},n)}$ are over $\Omega_{n-1}$.
\end{prop}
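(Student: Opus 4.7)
The plan is to start from the explicit formula \eqref{eq2} applied with the single-variable function $f(m) = m\,p(m,n)$, where $n$ is held fixed as a parameter. This gives
\[
\avg{S_{2,n}p(S_{2,n},n)} = C_n\sum_{m=1}^{\floor{n/2}} \frac{2^{n-2m}\, m}{(n-2m)!\,m!\,(m-1)!}\,p(m,n),
\]
with $C_n := n!(n-1)!(n-2)!/(2n-2)!$. The proposition is essentially an identity between three concrete sums, so the whole proof will be an algebraic rearrangement of this one.

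The key trick I would use is the decomposition $m = \tfrac{n}{2} - \tfrac{n-2m}{2}$, which splits the sum into two pieces. The $n/2$ piece instantly reproduces $\tfrac{n}{2}\avg{p(S_{2,n},n)}$ by \eqref{eq2}, which is exactly the first term on the right-hand side of \eqref{eq:prop1}. The remaining piece contributes
\[
-\tfrac{1}{2}\,C_n\sum_{m=1}^{\floor{n/2}} \frac{2^{n-2m}(n-2m)}{(n-2m)!\,m!\,(m-1)!}\,p(m,n),
\]
and here the factor $(n-2m)/(n-2m)! = 1/(n-2m-1)!$ collapses one factorial. Notice that when $n$ is even, the boundary term $m=n/2$ is automatically killed by the $(n-2m)$ factor, so the sum runs effectively over $1\le m\le\floor{(n-1)/2}$, which matches exactly the index range of $\avg{p(S_{2,n-1},n)}$ in \eqref{eq2}.

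With the index range aligned, the remaining piece is visibly proportional to $\avg{p(S_{2,n-1},n)}$; the proportionality constant is $-\tfrac{1}{2}\,C_n/C_{n-1}$ times an overall sign. A direct computation of the ratio $C_n/C_{n-1}$ telescopes:
\[
\frac{C_n}{C_{n-1}} = \frac{n!\,(2n-4)!}{(n-3)!\,(2n-2)!} = \frac{n(n-1)(n-2)}{(2n-2)(2n-3)} = \frac{n(n-2)}{2(2n-3)},
\]
which is precisely the coefficient appearing in \eqref{eq:prop1}. Combining with the $-\tfrac{1}{2}$ factor pulled out earlier gives the $-\tfrac{n(n-2)}{2(2n-3)}\avg{p(S_{2,n-1},n)}$ term, and the identity follows.

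The only mild obstacle I foresee is bookkeeping around the summation range, since \eqref{eq2} sums up to $\floor{n/2}$ while \eqref{eq:prop1} forces us to recognize an $\avg{p(S_{2,n-1},n)}$ sum running up to $\floor{(n-1)/2}$; one has to check that the $m=n/2$ term (present only for even $n$) is silently annihilated by the $(n-2m)$ factor produced by the decomposition. Everything else is routine factorial algebra, and the fact that $p$ is a polynomial of finite degree guarantees that the manipulations involve only finite sums, so no convergence issue arises.
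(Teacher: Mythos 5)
Your proof is correct, but it takes a genuinely different route from the paper. The paper restricts to monomials $p=S_{2,n}^k n^l$ by linearity, writes the moments as derivatives of the generating function $M_{2,n}(t)$ in its hypergeometric form \eqref{eq:1-2}, and extracts the recursion from the contiguous-relation identity \eqref{eq:derivative} for $\GaussF{\alpha}{\beta}{\gamma}{z}$; the factor $\frac{n-2}{2}\GaussF{\frac{3-n}{2}}{\frac{4-n}{2}}{2}{e^t}$ is then recognized as a multiple of $M_{2,n-1}(t)$, which is where the $\Omega_{n-1}$ average enters. You instead work directly on the finite sum \eqref{eq2} with $f(m)=m\,p(m,n)$, and your decomposition $m=\frac{n}{2}-\frac{n-2m}{2}$ together with the collapse $(n-2m)/(n-2m)!=1/(n-2m-1)!$ is exactly the elementary combinatorial identity that the hypergeometric derivative formula encodes. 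Your argument is more self-contained (no special-function machinery, no reduction to monomials needed) and makes transparent why the $m=n/2$ boundary term disappears; the paper's version buys a reusable operator-level technique that it exploits again in Prop.~\ref{prop2} with $(d/dt)^{-1}$ to handle negative moments. One detail you should tighten: after pulling out the $-\tfrac12$, the shifted sum carries $2^{n-2m}=2\cdot2^{(n-1)-2m}$, so the sum equals $\frac{2}{C_{n-1}}\avg{p(S_{2,n-1},n)}$ and the net coefficient is $-\tfrac12\cdot 2\cdot C_n/C_{n-1}=-C_n/C_{n-1}$; your phrase ``$-\tfrac12\,C_n/C_{n-1}$ times an overall sign'' glosses over this factor of $2$, even though your final coefficient $-\frac{n(n-2)}{2(2n-3)}$ is the right one.
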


\begin{proof}
Because of the linearity of $\avg{\cdot}$, it is sufficient to check the case $p(S_{2,n}, n)=S_{2,n}^k n^l$.
The average is expressed using the moment generating function $M_{2,n}(t)$ in Eq.~\eqref{eq:1-2}.
\begin{align*}
\avg{S_{2,n}S_{2,n}^k n^l}&=n^l\avg{S_{2,n}^{k+1}}
= n^l\left.\frac{d^{k+1}}{dt^{k+1}}M_{2,n}(t)\right|_{t=0}\\
&= n^l \frac{2^{n-2}n!(n-1)!}{(2n-2)!}\left.\frac{d^{k+1}}{dt^{k+1}}e^t\GaussF{\frac{2-n}{2}}{\frac{3-n}{2}}{2}{e^t}\right|_{t=0}\\
&= n^l\frac{2^{n-2}n!(n-1)!}{(2n-2)!}\left.\frac{d^k}{dt^k}e^t\left[\GaussF{\frac{2-n}{2}}{\frac{3-n}{2}}{2}{e^t}+\frac{d}{dt}\GaussF{\frac{2-n}{2}}{\frac{3-n}{2}}{2}{e^t}\right]\right|_{t=0}.
\end{align*}
Using the derivative of the Gauss hypergeometric function~\cite{Abramowitz}
\[
\frac{d}{dz}\GaussF{\alpha}{\beta}{\gamma}{z}
=\frac{\alpha}{z}[\GaussF{\alpha+1}{\beta}{\gamma}{z}-\GaussF{\alpha}{\beta}{\gamma}{z}]
\]
and the symmetry $\GaussF{\alpha}{\beta}{\gamma}{z}=\GaussF{\beta}{\alpha}{\gamma}{z}$, we have
\begin{equation}
\frac{d}{dt}\GaussF{\frac{2-n}{2}}{\frac{3-n}{2}}{2}{e^t}
=\frac{n-2}{2}\left[\GaussF{\frac{2-n}{2}}{\frac{3-n}{2}}{2}{e^t}-\GaussF{\frac{3-n}{2}}{\frac{4-n}{2}}{2}{e^t}\right],
\label{eq:derivative}
\end{equation}
thereby
\begin{align*}
\avg{S_{2,n}S_{2,n}^k n^l}
&=n^l \frac{2^{n-2}n!(n-1)!}{(2n-2)!}\frac{d^k}{dt^k}e^t\left[\frac{n}{2}\GaussF{\frac{2-n}{2}}{\frac{3-n}{2}}{2}{e^t}-\frac{n-2}{2}\GaussF{\frac{3-n}{2}}{\frac{4-n}{2}}{2}{e^t}\right]\\
&=n^l \left(\frac{n}{2}\avg{S_{2,n}^k}-\frac{2n(n-1)}{(2n-2)(2n-3)}\frac{n-2}{2}\avg{S_{2,n-1}^k}\right)\\
&=\frac{n}{2}\avg{S_{2,n}^k n^l}-\frac{n(n-2)}{2(2n-3)}\avg{S_{2,n-1}^k n^l}.
\end{align*}
\end{proof} 

Here we comment on the utility of Prop.~\ref{prop1}.
We can calculate the moments $\avg{S_{2,n}^k}$ recursively using Eq.~\eqref{eq:prop1}:
\begin{align*}
\avg{S_{2,n}}&=\avg{S_{2,n}\cdot1}=\frac{n}{2}\avg{1}-\frac{n(n-2)}{2(2n-3)}\avg{1}=\frac{n(n-1)}{2(2n-3)},\\
\avg{S_{2,n}^2}&=\avg{S_{2,n}S_{2,n}}=\frac{n}{2}\avg{S_{2,n}}-\frac{n(n-1)}{2(2n-3)}\avg{S_{2,n-1}}=\frac{n(n-1)(n^2-n-4)}{4(2n-3)(2n-5)},\\
\avg{S_{2,n}^3}&=\avg{S_{2,n}S_{2,n}^2}=\frac{n}{2}\avg{S_{2,n}^2}-\frac{n(n-1)}{2(2n-3)}\avg{S_{2,n-1}^2}=\frac{n(n-1)(n^4-2n^3-15n^2+32n+8)}{8(2n-3)(2n-5)(2n-7)},
\end{align*}
and so on.
The first and second moments were individually calculated~\cite{Werner} (see Eq.~\eqref{eq:Werner}).
Note, however, that Prop.~\ref{prop1} provides a systematic calculation method of the $k$th moment of $S_{2,n}$ in a bottom-up way.
Furthermore, we easily obtain $\avg{S_{2,n}^k}\sim(n/4)^k$ for $k=0,1,2,\ldots$ using Eq.~\eqref{eq:prop1}.

\begin{cor}\label{cor2}
Subtracting $n\avg{p(S_{2,n}, n)}/4$ from Eq.~\eqref{eq:prop1}, we have
\[
\avg{\left(S_{2,n}-\frac{n}{4}\right)p(S_{2,n},n)}=
\frac{n}{4}\avg{p(S_{2,n}, n)}-\frac{n(n-2)}{2(2n-3)}\avg{p(S_{2,n-1}, n)}.
\]
\end{cor}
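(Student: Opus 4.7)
The plan is to derive the identity exactly as the statement suggests: subtract $n\avg{p(S_{2,n},n)}/4$ from both sides of Eq.~\eqref{eq:prop1} in Proposition~\ref{prop1} and simplify. Because the average $\avg{\cdot}$ is linear on $\Omega_n$ and $n$ is a deterministic parameter of the probability space (not a random variable), the left-hand side collapses to
\[
\avg{S_{2,n}p(S_{2,n},n)}-\frac{n}{4}\avg{p(S_{2,n},n)}=\avg{\left(S_{2,n}-\frac{n}{4}\right)p(S_{2,n},n)},
\]
which is precisely the left-hand side of the desired equation.

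On the right-hand side, only the first term is altered by the subtraction: the coefficient of $\avg{p(S_{2,n},n)}$ drops from $n/2$ to $n/2-n/4=n/4$. The second term, $-n(n-2)\avg{p(S_{2,n-1},n)}/(2(2n-3))$, is carried over unchanged, since it involves the separate expectation over $\Omega_{n-1}$. Combining the two sides yields the claimed identity.

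There is no real obstacle here; the corollary is a purely algebraic rearrangement of Proposition~\ref{prop1}. Its point is structural rather than computational: centering the linear factor $S_{2,n}$ by its asymptotic mean $n/4$ (recall $\avg{S_{2,n}}\sim n/4$ from Eq.~\eqref{eq:Werner}) gives a recursion that directly produces powers of $S_{2,n}-n/4$. This is what will make it usable in the inductive estimation of moments $\avg{(S_{2,n}-n/4)^k}$ needed for the $r=1$ base case of Lemmas~\ref{lem1} and~\ref{lem2}.
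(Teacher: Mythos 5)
Your proposal is correct and matches the paper exactly: the corollary is obtained by subtracting $n\avg{p(S_{2,n},n)}/4$ from both sides of Eq.~\eqref{eq:prop1} and using linearity of $\avg{\cdot}$ to absorb the subtracted term into the left-hand side, leaving the $\Omega_{n-1}$ term untouched. Nothing further is needed.
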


\begin{lem}\label{lem3}
For $k=0,1,2,\ldots$,
\[
\avg{\left(S_{2,n}-\frac{n}{4}\right)^k}\sim\frac{k!}{2^{\lfloor(k+1)/2\rfloor}\lfloor k/2\rfloor!4^k}n^{\lfloor k/2\rfloor}.
\]
That is to say, the asymptotic form of $\avg{(S_{2,n}-n/4)^k}$ depends on whether $k$ is even or odd:
\begin{equation}
\avg{\left(S_{2,n}-\frac{n}{4}\right)^{2s}}\sim\frac{(2s-1)!!}{4^{2s}}n^s,\quad
\avg{\left(S_{2,n}-\frac{n}{4}\right)^{2s+1}}\sim\frac{(2s+1)!!}{2\cdot4^{2s+1}}n^s,
\label{eq:lem3}
\end{equation}
for $s=0,1,2,\ldots$.
\end{lem}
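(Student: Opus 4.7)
The plan is to induct on $k$, applying Corollary~\ref{cor2} with $p(x,n)=(x-n/4)^{k-1}$. Writing $a_{k,n}:=\avg{(S_{2,n}-n/4)^k}$, the corollary directly yields the recursion
\[
a_{k,n}=\frac{n}{4}\,a_{k-1,n}-\frac{n(n-2)}{2(2n-3)}\avg{(S_{2,n-1}-n/4)^{k-1}}.
\]
The base cases $k=0,1$ are immediate: $a_{0,n}=1$, and from Eq.~\eqref{eq:Werner} we get $a_{1,n}=\avg{S_{2,n}}-n/4 = n/(4(2n-3))\to 1/8$, which matches the claim. For the inductive step I would expand the inner expectation binomially around the ``natural'' center $(n-1)/4$:
\[
\avg{(S_{2,n-1}-n/4)^{k-1}}=\sum_{j=0}^{k-1}\binom{k-1}{j}\!\left(-\tfrac{1}{4}\right)^{k-1-j}a_{j,n-1}.
\]

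The first key observation is the algebraic identity $\frac{n}{4}-\frac{n(n-2)}{2(2n-3)}=\frac{n}{4(2n-3)}$, so that the $j=k-1$ summand combines with $\frac{n}{4}a_{k-1,n}$ to produce a finite difference $\frac{n}{4}[a_{k-1,n}-a_{k-1,n-1}]$ plus a correction $\frac{n}{4(2n-3)}\,a_{k-1,n-1}\to \tfrac{1}{8}a_{k-1,n-1}$. The inductive hypothesis provides $a_{j,m}\sim c_j m^{\lfloor j/2\rfloor}$ with the displayed constants, so the asymptotic order of every remaining piece can be read off; all summands with $j\le k-3$ are strictly subdominant to $n^{\lfloor k/2\rfloor}$.

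For even $k=2s$ only the $j=2s-2$ binomial summand contributes at order $n^s$, and combining $c_{2s-2}=(2s-3)!!/4^{2s-2}$ with the weight $\binom{2s-1}{2s-2}(-1/4)$ and $\frac{n(n-2)}{2(2n-3)}\sim n/4$ reproduces $(2s-1)!!/4^{2s}$ exactly. The odd case $k=2s+1$ is the real obstacle: \emph{four} distinct pieces contribute at the same leading order $n^s$, namely the finite difference $\frac{n}{4}[a_{2s,n}-a_{2s,n-1}]$, the correction $\frac{n}{4(2n-3)}a_{2s,n-1}$, and the $j=2s-1$ and $j=2s-2$ binomial terms. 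After factoring out $(2s-1)!!/4^{2s+1}$, their numerical weights come out to $s$, $1/2$, $s$, and $-s$ respectively, summing to $s+1/2=(2s+1)/2$ and thereby producing $(2s+1)!!/(2\cdot 4^{2s+1})$. Tracking these four contributions --- in particular using the just-established even-case constant $c_{2s}$ inside the finite-difference piece --- is the only delicate point; a sign slip or a misread binomial coefficient would spoil the telescoping.
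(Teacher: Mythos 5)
Your overall strategy is the paper's own: induct on $k$ via Corollary~\ref{cor2}, re-center $S_{2,n-1}$ at $(n-1)/4$, expand binomially, and treat even and odd $k$ separately. Your bookkeeping of the dominant summands is correct in both parities, and the four leading weights $s$, $1/2$, $s$, $-s$ in the odd case do sum to $(2s+1)/2$, reproducing $(2s+1)!!/(2\cdot4^{2s+1})$. The even case is also sound: there the finite-difference piece $\tfrac{n}{4}[a_{2s-1,n}-a_{2s-1,n-1}]$ only needs to be $o(n^s)$, which does follow from the inductive hypothesis.

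The gap is exactly at the point you flag as ``delicate'' but do not actually resolve: in the odd case you need the finite difference $\tfrac{n}{4}[a_{2s,n}-a_{2s,n-1}]$ to contribute $\tfrac{s}{4}c_{2s}n^{s}$ at leading order, and this does \emph{not} follow from the inductive hypothesis $a_{2s,n}\sim c_{2s}n^{s}$. Writing $a_{2s,n}=c_{2s}n^{s}+R_n$ with $R_n=o(n^{s})$, the difference $R_n-R_{n-1}$ is a priori only $o(n^{s})$, which can swamp the $O(n^{s-1})$ term $c_{2s}(n^{s}-(n-1)^{s})$ you want to isolate; one-term asymptotics are not stable under differencing. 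The paper closes this by strengthening the induction to a two-term expansion: it posits $\avg{(S_{2,n}-n/4)^{2s}}=\tfrac{(2s-1)!!}{4^{2s}}n^{s}+a_sn^{s-1}+o(n^{s-1})$ (legitimate because Proposition~\ref{prop1} shows every such moment is a rational function of $n$), keeps the two $O(n^{s+1})$ terms separate rather than forming a finite difference, and verifies that the unknown coefficient $a_s$ cancels between $\tfrac{n}{4}\avg{(\cdot)^{2s}}$ and $\tfrac{n(n-2)}{2(2n-3)}\avg{(\cdot)^{2s}}_{n-1}$. To repair your argument you must either carry this second-order coefficient through the induction as the paper does, or explicitly invoke the rationality of $a_{k,n}$ in $n$ to justify differentiating/differencing the asymptotic expansion. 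With that addition your proof coincides with the paper's.
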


\begin{rem*}
In $\avg{S_{2,n}-n/4}$ ($k=1$), the leading $O(n^1)$ terms are canceled because of $\avg{S_{2,n}}\sim n/4$.
Similarly, in general $k$, $O(n^k), O(n^{k-1}),\ldots$ terms are successively canceled, so that the resultant leading order of $\avg{(S_{2,n}-n/4)^k}$ becomes $\floor{k/2}$.
This effect makes the estimation of $\avg{(S_{2,n}-n/4)^k}$ difficult.
\end{rem*}

\begin{proof}[Proof]
The proof is by induction on $k$.
The statement is true for $k=0, 1$, and 2, because
\begin{align*}
\avg{\left(S_{2,n}-\frac{n}{4}\right)^0}&=1,\\
\avg{\left(S_{2,n}-\frac{n}{4}\right)^1}&=\frac{n(n-1)}{2(2n-3)}-\frac{n}{4}=\frac{n}{4(2n-3)}\sim\frac{1}{8},\\
\avg{\left(S_{2,n}-\frac{n}{4}\right)^2}&=\avg{S_{2,n}^2}-\frac{n}{2}\avg{S_{2,n}}+\frac{n^2}{16}=\frac{n(4n^2-17n+16)}{16(2n-3)(2n-5)}\sim\frac{n}{16}.
\end{align*}

Assume that it is true up to $k\ge2$, and we show it is true for $k+1$.
It follows from Cor.~\ref{cor2} that
\begin{align*}
\avg{\left(S_{2,n}-\frac{n}{4}\right)^{k+1}}&=\avg{\left(S_{2,n}-\frac{n}{4}\right)\left(S_{2,n}-\frac{n}{4}\right)^k}\\
&=\frac{n}{4}\avg{\left(S_{2,n}-\frac{n}{4}\right)^k}-\frac{n(n-2)}{2(2n-3)}\avg{\left(S_{2,n-1}-\frac{n}{4}\right)^k}\\
&=\frac{n}{4}\avg{\left(S_{2,n}-\frac{n}{4}\right)^k}-\frac{n(n-2)}{2(2n-3)}\avg{\left(S_{2,n-1}-\frac{n-1}{4}-\frac{1}{4}\right)^k}\\
&=\frac{n}{4}\avg{\left(S_{2,n}-\frac{n}{4}\right)^k}-\frac{n(n-2)}{2(2n-3)}\sum_{l=0}^k\binom{k}{l}\left(-\frac{1}{4}\right)^l\avg{\left(S_{2,n-1}-\frac{n-1}{4}\right)^{k-l}}.
\end{align*}
In the following, we separately investigate odd and even $k$.

Case 1: $k$ is odd ($k=2s+1$).
The dominant terms are $l=0$ and 1 in the summation, and the others can be neglected.
Thus,
\begin{align*}
&\avg{\left(S_{2,n}-\frac{n}{4}\right)^{2s+1+1}}\\
&\sim\frac{n}{4}\avg{\left(S_{2,n}-\frac{n}{4}\right)^{2s+1}}
-\frac{n(n-2)}{2(2n-3)}\left(\avg{\left(S_{2,n-1}-\frac{n-1}{4}\right)^{2s+1}}-\frac{2s+1}{4}\avg{\left(S_{2,n-1}-\frac{n-1}{4}\right)^{2s}}\right)\\
&\sim\frac{n}{4}\frac{(2s+1)!!}{4^{2s+1}}-\frac{n}{4}\left(\frac{(2s+1)!!}{4^{2s+1}}(n-1)^s-\frac{2s+1}{4}\frac{(2s-1)!!}{4^{2s}}(n-1)^s\right)\\
&\sim\frac{(2s+1)!!}{4^{2s+2}}n^{s+1}.
\end{align*}

Case 2: $k$ is even ($k=2s$).
Picking out the terms up to $O(n^s)$ carefully, we obtain
\begin{align}
&\avg{\left(S_{2,n}-\frac{n}{4}\right)^{2s+1}}\nonumber\\
&=\frac{n}{4}\avg{\left(S_{2,n}-\frac{n}{4}\right)^{2s}}-\frac{n(n-2)}{2(2n-3)}\left(\avg{\left(S_{2,n-1}-\frac{n-1}{4}\right)^{2s}}-\frac{2s}{4}\avg{\left(S_{2,n-1}-\frac{n-1}{4}\right)^{2s-1}}\right.\nonumber\\
&\qquad\left.+\frac{2s(2s-1)}{16\cdot2}\avg{\left(S_{2,n-1}-\frac{n-1}{4}\right)^{2s-2}}+o(n^{s-1})\right).\label{eq:lem3case2}
\end{align}
We introduce the coefficient $a_s$ as
\[
\avg{\left(S_{2,n}-\frac{n}{4}\right)^{2s}}=\frac{(2s-1)!!}{4^{2s}}n^s + a_s n^{s-1} + o(n^{s-1}),
\]
and expand each term on the right-hand side of Eq.~\eqref{eq:lem3case2} up to $O(n^s)$ using the induction hypothesis:
\begin{align*}
\frac{n}{4}\avg{\left(S_{2,n}-\frac{n}{4}\right)^{2s}}
&=\frac{(2s-1)!!}{4^{2s+1}}n^{s+1}+\frac{a_s}{4}n^s+o(n^{s}),\\
\frac{n(n-2)}{2(2n-3)}\avg{\left(S_{2,n-1}-\frac{n-1}{4}\right)^{2s}}
&=\frac{(2s-1)!!}{4^{2s+1}}n^{s+1}-\frac{(2s+1)!!}{2\cdot4^{2s+1}}n^s+\frac{a_s}{4}n^s+o(n^s),\\
\frac{n(n-2)}{2(2n-3)}\frac{2s}{4}\avg{\left(S_{2,n-1}-\frac{n-1}{4}\right)^{2s-1}}
&=\frac{(2s-1)!!}{4^{2s+1}}sn^s+o(n^s),\\
\frac{n(n-2)}{2(2n-3)}\frac{2s(2s-1)}{16\cdot2}\avg{\left(S_{2,n-1}-\frac{n-1}{4}\right)^{2s-2}}
&=\frac{(2s-1)!!}{4^{2s+1}}sn^s+o(n^s).
\end{align*}
Therefore,
\[
\avg{\left(S_{2,n}-\frac{n}{4}\right)^{2s+1}}=\frac{(2s+1)!!}{2\cdot4^{2s+1}}n^s+o(n^s).
\]
Note that $O(n^{s+1})$ terms and those including $a_s$ are all cancelled.

Therefore, the statement is true for any $k$.
\end{proof}

\begin{cor}\label{cor3}
Multiplying Eq.~\eqref{eq:lem3} by $n^{-k}$, we have
\[
\avg{\left(\frac{S_{2,n}}{n}-\frac{1}{4}\right)^{2s}}\sim\frac{(2s-1)!!}{4^{2s}}n^{-s},\quad
\avg{\left(\frac{S_{2,n}}{n}-\frac{1}{4}\right)^{2s+1}}\sim\frac{(2s+1)!!}{2\cdot4^{2s+1}}n^{-s-1}.
\]
\end{cor}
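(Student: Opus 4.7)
The plan is to observe that Corollary~\ref{cor3} is essentially a one-line rescaling of Lemma~\ref{lem3}, so the proof should amount to pulling the factor $n^{-k}$ through the expectation and applying the asymptotic formulas already established.

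The first step is to note the algebraic identity
\[
\left(\frac{S_{2,n}}{n}-\frac{1}{4}\right)^k = \frac{1}{n^k}\left(S_{2,n}-\frac{n}{4}\right)^k,
\]
which holds deterministically for each $\tau\in\Omega_n$ and each nonnegative integer $k$. Since $n$ is a constant with respect to the averaging over $\Omega_n$, linearity of $\avg{\cdot}$ yields
\[
\avg{\left(\frac{S_{2,n}}{n}-\frac{1}{4}\right)^k} = n^{-k}\avg{\left(S_{2,n}-\frac{n}{4}\right)^k}.
\]

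Next, I would substitute the two cases of Eq.~\eqref{eq:lem3} from Lemma~\ref{lem3}. For $k=2s$, multiplying the asymptotic relation $\avg{(S_{2,n}-n/4)^{2s}}\sim \frac{(2s-1)!!}{4^{2s}}n^s$ by $n^{-2s}$ gives $\frac{(2s-1)!!}{4^{2s}}n^{-s}$. For $k=2s+1$, multiplying $\avg{(S_{2,n}-n/4)^{2s+1}}\sim \frac{(2s+1)!!}{2\cdot 4^{2s+1}}n^s$ by $n^{-(2s+1)}$ gives $\frac{(2s+1)!!}{2\cdot 4^{2s+1}}n^{-s-1}$. Because the asymptotic equivalence $\sim$ is preserved under multiplication by a deterministic nonzero factor (here $n^{-k}$, whose ratio to itself is identically $1$), both asymptotic statements transfer directly.

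There is essentially no obstacle: the content of the corollary is entirely contained in Lemma~\ref{lem3}, and the only substantive work (cancellation of leading terms that lowered the power of $n$ from $k$ to $\lfloor k/2\rfloor$) has already been carried out there. The corollary merely records the form most convenient for feeding into the characteristic-function computation in the proof of Theorem~\ref{thm2} with $r=1$.
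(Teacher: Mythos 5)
Your proposal is correct and matches the paper's own (one-line) justification: the corollary is obtained by pulling the deterministic factor $n^{-k}$ out of the expectation and rescaling the two asymptotic formulas of Lemma~\ref{lem3}, exactly as you describe. No further comment is needed.
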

\begin{rem*}
This result corresponds to the case $r=1$ in Lemmas~\ref{lem1} and \ref{lem2}.
\end{rem*}

Using this corollary, we can provide another proof of Theorem~\ref{thm1} as follows.
The characteristic function $\varphi_{1,n}^2(z)$ for the lowest bifurcation ratio in Theorem~\ref{thm1} is calculated as
\begin{align*}
\varphi_{1,n}^2(z)&= \avg{\exp\left(iz\sqrt{n}\left(\frac{S_{2,n}}{n}-\frac{1}{4}\right)\right)}\\
&=\avg{\sum_{k=0}^\infty \frac{(iz\sqrt{n})^k}{k!}\left(\frac{S_{2,n}}{n}-\frac{1}{4}\right)^k}\\
&=\sum_{k=0}^\infty \frac{(iz\sqrt{n})^k}{k!}\avg{\left(\frac{S_{2,n}}{n}-\frac{1}{4}\right)^k}\\
&=\sum_{s=0}^\infty \frac{(iz\sqrt{n})^{2s}}{(2s)!}\avg{\left(\frac{S_{2,n}}{n}-\frac{1}{4}\right)^{2s}}+\sum_{s=0}^\infty \frac{(iz\sqrt{n})^{2s+1}}{(2s+1)!}\avg{\left(\frac{S_{2,n}}{n}-\frac{1}{4}\right)^{2s+1}}\\
&\sim\sum_{s=0}^\infty \left(\frac{-z^2}{32}\right)^s\frac{1}{s!}\\
&=\exp\left(-\frac{z^2}{32}\right),
\end{align*}
so the convergence of $\sqrt{n}(S_{2,n}/n-1/4)$ to $N(0, 1/16)$ is proved.

\section{Proof of Lemma \ref{lem1}}\label{sec4}
We first derive the asymptotic form of $\avg{S_{2,n}^{-k}}$, which is needed in the proof of Lemma~\ref{lem1}.

\begin{prop}\label{prop2}
For $k=0,1,2,\ldots$, we have
\[
\avg{S_{2,n}^{-k}}\sim \left(\frac{n}{4}\right)^{-k}.
\]
\end{prop}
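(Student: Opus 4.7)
The plan is to combine the concentration of $S_{2,n}$ around $n/4$ (as quantified by Lemma~\ref{lem3}) with the uniform lower bound $S_{2,n}\geq 1$ (noted in \S\ref{sec1} for $n\geq 2$) in a truncation argument. Since $S_{2,n}^{-k}$ is not a polynomial in $S_{2,n}$, the recursion in Prop.~\ref{prop1} is not directly applicable, so a slightly more analytic approach is needed.

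Concretely, I would split
\begin{equation*}
\avg{S_{2,n}^{-k}}=\avg{S_{2,n}^{-k}\mathbf{1}_{A_n}}+\avg{S_{2,n}^{-k}\mathbf{1}_{A_n^{c}}},
\end{equation*}
where $A_n=\{|S_{2,n}-n/4|\leq n^{3/4}\}$ is the ``typical'' event. On $A_n$ one has $S_{2,n}=(n/4)(1+\varepsilon_n)$ with $|\varepsilon_n|\leq 4n^{-1/4}$, and the binomial expansion $S_{2,n}^{-k}=(n/4)^{-k}(1+O(n^{-1/4}))$ holds uniformly on $A_n$. Since $P_n(A_n)\to 1$ by Chebyshev's inequality together with Lemma~\ref{lem3}, this piece already yields $\avg{S_{2,n}^{-k}\mathbf{1}_{A_n}}\sim(n/4)^{-k}$. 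For the atypical part, the crude bound $S_{2,n}^{-k}\leq 1$ (valid since $S_{2,n}\geq 1$) combined with Markov's inequality on the $(2s)$th central moment gives
\begin{equation*}
\avg{S_{2,n}^{-k}\mathbf{1}_{A_n^{c}}}\leq P_n(A_n^{c})\leq\frac{\avg{(S_{2,n}-n/4)^{2s}}}{n^{3s/2}}=O(n^{-s/2}),
\end{equation*}
where the final estimate uses Lemma~\ref{lem3}. Choosing any $s>2k$ makes this $o(n^{-k})$, so it is absorbed into the error.

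The main obstacle, which the truncation resolves, is the pointwise ``singularity'' of $S_{2,n}^{-k}$: $S_{2,n}$ can be as small as $1$, so $S_{2,n}^{-k}$ can be many orders of magnitude larger than its typical value $(n/4)^{-k}$. What saves the argument is the trade-off between the trivial cap $S_{2,n}^{-k}\leq 1$ and the arbitrarily-fast polynomial concentration from Lemma~\ref{lem3} for large even moments; together they force the rare-small-$S_{2,n}$ event to contribute negligibly compared to $(n/4)^{-k}$.
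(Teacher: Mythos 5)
Your proof is correct, but it takes a genuinely different route from the paper. The paper's proof is algebraic: it introduces the antiderivative operator $(d/dt)^{-1}$, applies it to the hypergeometric representation of the moment generating function in Eq.~\eqref{eq:1-2}, and integrates by parts to obtain an exact recursion $\avg{S_{2,n}^{-(k+1)}}=\avg{S_{2,n}^{-k}}-\frac{n-2}{2}\avg{S_{2,n}^{-(k+1)}}+\frac{n(n-2)}{2(2n-3)}\avg{S_{2,n-1}^{-(k+1)}}$, whose asymptotic form $\avg{S_{2,n}^{-(k+1)}}\sim\frac{4}{n}\avg{S_{2,n}^{-k}}$ yields the claim by induction on $k$ --- the negative-moment analogue of Prop.~\ref{prop1}. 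Your truncation argument instead exploits two probabilistic facts: the deterministic bound $1\le S_{2,n}\le n/2$, which caps $S_{2,n}^{-k}$ by $1$, and the arbitrarily-high-order polynomial concentration supplied by the even case of Lemma~\ref{lem3} via Markov's inequality; the logic is sound and non-circular, since Lemma~\ref{lem3} is established in \S\ref{sec3} independently of Prop.~\ref{prop2}. Each approach has its merits: the paper's recursion is exact (not merely asymptotic) and stays within the generating-function machinery used throughout, whereas your argument is more elementary, sidesteps the hypergeometric identities and the justification of the boundary term at $t=-\infty$, and generalizes immediately to $\avg{g(S_{2,n})}$ for any uniformly bounded $g$ that is Lipschitz near $n/4$ --- though it relies essentially on the uniform bound $S_{2,n}^{-k}\le 1$, which is special to negative powers of a variable bounded below by $1$.
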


\begin{rem*}
Since $S_{2,n}(\tau)\ne0$ for any $\tau\in\Omega_n$, $S_{2,n}^{-k}$ surely takes a finite value and $\avg{S_{2,n}^{-k}}$ is not divergent.
\end{rem*}

\begin{proof}
Let us introduce the operator $(d/dt)^{-1}$ defined by
\[
\left(\frac{d}{dt}\right)^{-1}f(t):=\int_{-\infty}^t f(s)ds,
\]
where $f$ is integrable on any interval $(-\infty, t)$.
Note that $(d/dt)^{-1}$ is the inverse of $d/dt$.
Owing to the property
\[
\left(\frac{d}{dt}\right)^{-k}e^{mt}=m^{-k}e^{mt},
\]
the average of $S_{2,n}^{-k}$ is expressed by
\begin{align*}
\avg{S_{2,n}^{-k}}&=\frac{n!(n-1)!(n-2)!}{(2n-2)!}\sum_{m=1}^{\floor{n/2}}\frac{2^{n-2m}}{(n-2m)!m!(m-1)!}m^{-k}\\
&=\frac{n!(n-1)!(n-2)!}{(2n-2)!}\sum_{m=1}^{\floor{n/2}}\frac{2^{n-2m}}{(n-2m)!m!(m-1)!}\left.\left(\frac{d}{dt}\right)^{-k}e^{mt}\right|_{t=0}\\
&=\frac{2^{n-2}n!(n-1)!}{(2n-2)!}\left.\left(\frac{d}{dt}\right)^{-k}e^t\GaussF{\frac{2-n}{2}}{\frac{3-n}{2}}{2}{e^t}\right|_{t=0}.
\end{align*}
Let us derive a relation between $\avg{S_{2,n}^{-k}}$ and $\avg{S_{2,n}^{-(k+1)}}$ as in Prop.~\ref{prop1}.
\begin{align*}
\avg{S_{2,n}^{-(k+1)}}&=\frac{2^{n-2}n!(n-1)!}{(2n-2)!}\left.\left(\frac{d}{dt}\right)^{-k}\left(\frac{d}{dt}\right)^{-1}e^t\GaussF{\frac{2-n}{2}}{\frac{3-n}{2}}{2}{e^t}\right|_{t=0}\\
&=\frac{2^{n-2}n!(n-1)!}{(2n-2)!}\left.\left(\frac{d}{dt}\right)^{-k}\int_{-\infty}^t e^s\GaussF{\frac{2-n}{2}}{\frac{3-n}{2}}{2}{e^s}ds\right|_{t=0}\\
&=\frac{2^{n-2}n!(n-1)!}{(2n-2)!}\left.\left(\frac{d}{dt}\right)^{-k}\left\{\left[e^s\GaussF{\frac{2-n}{2}}{\frac{3-n}{2}}{2}{e^s}\right]_{s=-\infty}^t -\int_{-\infty}^t e^s\frac{d}{ds}\GaussF{\frac{2-n}{2}}{\frac{3-n}{2}}{2}{e^s}ds\right\}\right|_{t=0}\\
&=\frac{2^{n-2}n!(n-1)!}{(2n-2)!}\left.\left(\frac{d}{dt}\right)^{-k}\left\{e^t\GaussF{\frac{2-n}{2}}{\frac{3-n}{2}}{2}{e^t}-\left(\frac{d}{dt}\right)^{-1} e^t\frac{d}{dt}\GaussF{\frac{2-n}{2}}{\frac{3-n}{2}}{2}{e^t}\right\}\right|_{t=0}\\
&=\avg{S_{2,n}^{-k}}-\frac{2^{n-2}n!(n-1)!}{(2n-2)!}\left.\left(\frac{d}{dt}\right)^{-(k+1)}e^t\frac{d}{dt}\GaussF{\frac{2-n}{2}}{\frac{3-n}{2}}{2}{e^t}\right|_{t=0}.
\end{align*}
By using the derivative of the hypergeometric function in Eq.~\eqref{eq:derivative},
\[
\avg{S_{2,n}^{-(k+1)}}=\avg{S_{2,n}^{-k}}-\frac{n-2}{2}\avg{S_{2,n}^{-(k+1)}}+\frac{n(n-2)}{2(2n-3)}\avg{S_{2,n-1}^{-(k+1)}},
\]
whose asymptotic form is
\[
\frac{n}{2}\avg{S_{2,n}^{-(k+1)}}=\avg{S_{2,n}^{-k}}+\frac{n(n-2)}{2(2n-3)}\avg{S_{2,n-1}^{-(k+1)}}
\sim\avg{S_{2,n}^{-k}}+\frac{n}{4}\avg{S_{2,n}^{-(k+1)}},
\]
or
\[
\avg{S_{2,n}^{-(k+1)}}\sim\frac{4}{n}\avg{S_{2,n}^{-k}}.
\]
Considering $\avg{S_{2,n}^0}=1$, we obtain
\[
\avg{S_{2,n}^{-k}}\sim\left(\frac{n}{4}\right)^{-k}.
\]
\end{proof}

\begin{proof}[Proof of Lemma~\ref{lem1}]
By induction on $r$.
For $r=1$, the statement is equivalent to Cor.~\ref{cor3}.

Assume that it is true up to $r-1$, and we show it is true for $r$.
Using Eq.~\eqref{eq:1-3},
\begin{align*}
\avg{\left(\frac{S_{r+1,n}}{S_{r,n}}-\frac{1}{4}\right)^k}
&=\frac{n!(n-1)!(n-2)!}{(2n-2)!}\sum_{m=1}^{\floor{n/2}}\frac{2^{n-2m}}{(n-2m)!m!(m-1)!}\avg{\left(\frac{S_{r,m}}{S_{r-1,m}}-\frac{1}{4}\right)^k}.
\end{align*}

Case 1: $k$ is even ($k=2s$).
\begin{align*}
\avg{\left(\frac{S_{r+1,n}}{S_{r,n}}-\frac{1}{4}\right)^{2s}}
&=\frac{n!(n-1)!(n-2)!}{(2n-2)!}\sum_{m=1}^{\floor{n/2}}\frac{2^{n-2m}}{(n-2m)!m!(m-1)!}\avg{\left(\frac{S_{r,m}}{S_{r-1,m}}-\frac{1}{4}\right)^{2s}}\\
&\sim\frac{n!(n-1)!(n-2)!}{(2n-2)!}\sum_{m=1}^{\floor{n/2}}\frac{2^{n-2m}}{(n-2m)!m!(m-1)!}\frac{(2s-1)!!}{4^{2s}}\left(\frac{m}{4^{r-2}}\right)^{-s}\\
&=\frac{(2s-1)!!}{4^{2s}4^{-s(r-2)}}\frac{n!(n-1)!(n-2)!}{(2n-2)!}\sum_{m=1}^{\floor{n/2}}\frac{2^{n-2m}}{(n-2m)!m!(m-1)!}m^{-s}.
\end{align*}
By using Eq.~\eqref{eq:1-3} again and Prop.~\ref{prop2},
\[
\avg{\left(\frac{S_{r+1,n}}{S_{r,n}}-\frac{1}{4}\right)^{2s}}
\sim\frac{(2s-1)!!}{4^{2s}4^{-s(r-2)}}\avg{S_{2,n}^{-s}}
\sim\frac{(2s-1)!!}{4^{2s}4^{-s(r-2)}}\left(\frac{n}{4}\right)^{-s}
=\frac{(2s-1)!!}{4^{2s}}\left(\frac{n}{4^{r-1}}\right)^{-s}.
\]

Case 2: $k$ is odd ($k=2s+1$).
Using Eq.~\eqref{eq:1-3} and Prop.~\ref{prop2} as above,
\begin{align*}
\avg{\left(\frac{S_{r+1,n}}{S_{r,n}}-\frac{1}{4}\right)^{2s+1}}
&=\frac{n!(n-1)!(n-2)!}{(2n-2)!}\sum_{m=1}^{\floor{n/2}}\frac{2^{n-2m}}{(n-2m)!m!(m-1)!}\avg{\left(\frac{S_{r,m}}{S_{r-1,m}}-\frac{1}{4}\right)^{2s+1}}\\
&\sim\frac{n!(n-1)!(n-2)!}{(2n-2)!}\sum_{m=1}^{\floor{n/2}}\frac{2^{n-2m}}{(n-2m)!m!(m-1)!}\frac{(2s+1)!!}{2\cdot4^{2s+1}}\left(\frac{m}{4^{r-2}}\right)^{-s-1}\\
&\sim\frac{(2s+1)!!}{2\cdot4^{2s+1}4^{(r-1)(-s-1)}}\avg{S_{2,n}^{-s-1}}\\
&=\frac{(2s+1)!!}{2\cdot4^{2s+1}}\left(\frac{n}{4^r}\right)^{-s-1}.
\end{align*}

\end{proof} 

\section{Proof of Lemma \ref{lem2}}\label{sec5}
In the proof of Lemma~\ref{lem2}, we use the following relations.
\begin{lem}\label{lem4}
For $s, l=0,1,2,\ldots$,
\[
\avg{S_{2,n}^l\left(S_{2,n}-\frac{n}{4}\right)^{2s}}\sim\left(\frac{n}{4}\right)^l\frac{(2s-1)!!}{4^{2s}}n^s, \quad
\avg{S_{2,n}^l\left(S_{2,n}-\frac{n}{4}\right)^{2s+1}}\sim\left(\frac{n}{4}\right)^l\frac{(2s+1)!!}{2\cdot4^{2s+1}}(2l+1)n^s.
\]
\end{lem}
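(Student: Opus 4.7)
The plan is to reduce Lemma~\ref{lem4} to Lemma~\ref{lem3} by expanding the prefactor $S_{2,n}^l$ around its mean $n/4$. I would begin by writing
\[
S_{2,n}^l = \left(\left(S_{2,n}-\frac{n}{4}\right)+\frac{n}{4}\right)^l
=\sum_{j=0}^l \binom{l}{j}\left(\frac{n}{4}\right)^{l-j}\left(S_{2,n}-\frac{n}{4}\right)^j,
\]
so that, upon multiplying by $(S_{2,n}-n/4)^k$ and taking averages,
\[
\avg{S_{2,n}^l\left(S_{2,n}-\frac{n}{4}\right)^k}
=\sum_{j=0}^l \binom{l}{j}\left(\frac{n}{4}\right)^{l-j}\avg{\left(S_{2,n}-\frac{n}{4}\right)^{k+j}}.
\]
Each average on the right is controlled by Lemma~\ref{lem3}, so the only remaining task is to identify which summands contribute to leading order in $n$.

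By Lemma~\ref{lem3}, $\avg{(S_{2,n}-n/4)^{k+j}}$ is of order $n^{\floor{(k+j)/2}}$, hence the $j$-th summand is of order $n^{l-j+\floor{(k+j)/2}}$. For the even case $k=2s$, this exponent equals $l+s$ at $j=0$ and strictly decreases for $j\ge 1$, so only $j=0$ survives at leading order; plugging in the even half of Lemma~\ref{lem3} directly produces $(n/4)^l(2s-1)!!/4^{2s}\cdot n^s$, as claimed.

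For the odd case $k=2s+1$, the exponent $l-j+\floor{(k+j)/2}$ equals $l+s$ at \emph{both} $j=0$ and $j=1$, and drops for $j\ge 2$, so two summands survive at leading order. The $j=0$ piece contributes $(n/4)^l(2s+1)!!/(2\cdot 4^{2s+1})\,n^s$ via the odd half of Lemma~\ref{lem3}; the $j=1$ piece uses the even half with exponent $2s+2$ and, after rewriting $(n/4)^{l-1}n^{s+1}=4(n/4)^l n^s$, simplifies to $l(n/4)^l(2s+1)!!/4^{2s+1}\,n^s$. Adding these and pulling out the common factor $(2s+1)!!/(2\cdot 4^{2s+1})$ will leave the combination $1+2l=2l+1$, matching the stated asymptotic.

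The main subtlety is the coexistence of two leading-order summands in the odd case, which is the precise mechanism producing the $2l+1$ factor absent from the even case. Once this coincidence of orders is noticed, no new ingredient beyond Lemma~\ref{lem3} is needed; no further induction or hypergeometric manipulation is required.
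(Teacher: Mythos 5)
Your proposal is correct, but it takes a genuinely different route from the paper. The paper proves Lemma~\ref{lem4} by induction on $l$, using the recursion of Prop.~\ref{prop1} to pass from $\avg{S_{2,n}^{l}(\cdot)}$ to $\avg{S_{2,n}^{l+1}(\cdot)}$; this involves averages over both $\Omega_n$ and $\Omega_{n-1}$ and a binomial re-centering $(S_{2,n-1}-\tfrac{n}{4})^k=(S_{2,n-1}-\tfrac{n-1}{4}-\tfrac14)^k$ whose dominant terms must be tracked at each step. You instead expand the prefactor, $S_{2,n}^l=\sum_{j}\binom{l}{j}(\tfrac{n}{4})^{l-j}(S_{2,n}-\tfrac{n}{4})^{j}$, which turns the claim into an exact finite linear combination of the centered moments already controlled by Lemma~\ref{lem3}; no new recursion is needed. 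Your order count is right: for $k=2s$ the $j$-th summand has order $n^{l+s-\lceil j/2\rceil}$, so only $j=0$ survives (the exponent is not strictly decreasing in $j$ as you say, but it is $\le l+s-1$ for all $j\ge1$, which is what matters); for $k=2s+1$ it is $n^{l+s-\lfloor j/2\rfloor}$, so $j=0$ and $j=1$ both contribute, their leading coefficients are positive (hence no cancellation when adding asymptotic equivalences), and $\tfrac12+l$ reproduces the factor $2l+1$. Your approach buys a shorter, more transparent argument that isolates the origin of the $(2l+1)$ factor as the coincidence of orders at $j=0,1$; the paper's approach buys uniformity with its other proofs, reusing the Prop.~\ref{prop1} machinery and leaning on Lemma~\ref{lem3} only as the $l=0$ base case rather than for all moments up to order $k+l$.
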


\begin{rem*}
The complicated form of the odd-power result in Lemma~\ref{lem2} is actually due to the factor ``$(2l+1)$''.
\end{rem*}

\begin{proof}
By induction on $l$.
For $l=0$, the statement is equivalent to Lemma~\ref{lem3}.

Assume that it is true up to $l\ge0$, and we show for $l+1$.
Using Prop.~\ref{prop1},
\begin{align*}
\avg{S_{2,n}^{l+1}\left(S_{2,n}-\frac{n}{4}\right)^k}
&=\frac{n}{2}\avg{S_{2,n}^l\left(S_{2,n}-\frac{n}{4}\right)^k}-\frac{n(n-2)}{2(2n-3)}\avg{S_{2,n-1}^l\left(S_{2,n-1}-\frac{n}{4}\right)^k}\\
&=\frac{n}{2}\avg{S_{2,n}^l\left(S_{2,n}-\frac{n}{4}\right)^k}-\frac{n(n-2)}{2(2n-3)}\avg{S_{2,n-1}^l\left(S_{2,n-1}-\frac{n-1}{4}-\frac{1}{4}\right)^k}\\
&=\frac{n}{2}\avg{S_{2,n}^l\left(S_{2,n}-\frac{n}{4}\right)^k}-\frac{n(n-2)}{2(2n-3)}\sum_{p=0}^k\binom{k}{p}\left(-\frac{1}{4}\right)^p\avg{S_{2,n-1}^l\left(S_{2,n-1}-\frac{n-1}{4}\right)^{k-p}}.
\end{align*}

Case 1: $k$ is even ($k=2s$).
In the summation, only $p=0$ is dominant, so that
\[
\avg{S_{2,n}^{l+1}\left(S_{2,n}-\frac{n}{4}\right)^{2s}}
\sim\frac{n}{2}\avg{S_{2,n}^l\left(S_{2,n}-\frac{n}{4}\right)^{2s}}-\frac{n(n-2)}{2(2n-3)}\avg{S_{2,n-1}^l\left(S_{2,n-1}-\frac{n-1}{4}\right)^{2s}}.
\]
By using the induction hypothesis,
\[
\avg{S_{2,n}^{l+1}\left(S_{2,n}-\frac{n}{4}\right)^{2s}}
\sim\frac{n}{2}\left(\frac{n}{4}\right)^l\frac{(2s-1)!!}{4^{2s}}n^s-\frac{n}{4}\left(\frac{n-1}{4}\right)^l\frac{(2s-1)!!}{4^{2s}}(n-1)^s
\sim\left(\frac{n}{4}\right)^{l+1}\frac{(2s-1)!!}{4^{2s}}n^s.
\]

Case 2: $k$ is odd ($k=2s+1$).
Note that $p=0$ and 1 in the summation are the dominant terms.
\begin{align*}
&\avg{S_{2,n}^{l+1}\left(S_{2,n}-\frac{n}{4}\right)^{2s+1}}\\
&\sim\frac{n}{2}\avg{S_{2,n}^l\left(S_{2,n}-\frac{n}{4}\right)^{2s+1}}\\
&\quad-\frac{n(n-2)}{2(2n-3)}\left(\avg{S_{2,n-1}^l\left(S_{2,n-1}-\frac{n-1}{4}\right)^{2s+1}}-\frac{(2s+1)}{4}\avg{S_{2,n-1}^l\left(S_{2,n-1}-\frac{n-1}{4}\right)^{2s}}\right)\\
&\sim\frac{n}{2}\left(\frac{n}{4}\right)^l\frac{(2s+1)!!}{2\cdot4^{2s+1}}(2l+1)n^s\\
&\quad-\frac{n(n-2)}{2(2n-3)}\left[\left(\frac{n-1}{4}\right)^l\frac{(2s+1)!!}{2\cdot4^{2s+1}}(2l+1)(n-1)^s-\frac{(2s+1)}{4}\left(\frac{n-1}{4}\right)^l\frac{(2s-1)!!}{4^{2s}}(n-1)^s\right]\\
&\sim\left(\frac{n}{4}\right)^{l+1}\frac{(2s+1)!!}{2\cdot4^{2s+1}}(2l+3)n^s.
\end{align*}
\end{proof}

\begin{proof}[Proof of Lemma~\ref{lem2}]
By induction on $r$.
For $r=1$, the statement is equivalent to Lemma~\ref{lem3}.

Assume that it is true up to $r-1$, and we show it is true for $r$.
Using Eq.~\eqref{eq:1-3} to calculate
\begin{align*}
\avg{\left(S_{r+1,n}-\frac{n}{4^r}\right)^k}
&=\frac{n!(n-1)!(n-2)!}{(2n-2)!}\sum_{m=1}^{\floor{n/2}}\frac{2^{n-2m}}{(n-2m)!m!(m-1)!}\avg{\left(S_{r,m}-\frac{n}{4^r}\right)^k}\\
&=\frac{n!(n-1)!(n-2)!}{(2n-2)!}\sum_{m=1}^{\floor{n/2}}\frac{2^{n-2m}}{(n-2m)!m!(m-1)!}\avg{\left(S_{r,m}-\frac{m}{4^{r-1}}+\frac{m}{4^{r-1}}-\frac{n}{4^r}\right)^k}\\
&=\frac{n!(n-1)!(n-2)!}{(2n-2)!}\sum_{m=1}^{\floor{n/2}}\frac{2^{n-2m}}{(n-2m)!m!(m-1)!}
	\sum_{l=0}^{k+1}\binom{k}{l}\left(\frac{m}{4^{r-1}}-\frac{n}{4^r}\right)^{k-l}\avg{\left(S_{r.m}-\frac{m}{4^{r-1}}\right)^l}.
\end{align*}
We split the summation over $l$ according to the parity of $l$, and use the induction hypothesis.
The estimation of the summation is complex compared with the other proofs above.

Case 1: $k$ is even ($k=2s$).
\begin{align*}
\avg{\left(S_{r+1,n}-\frac{n}{4^r}\right)^{2s}}
&=\frac{n!(n-1)!(n-2)!}{(2n-2)!}\sum_{m=1}^{\floor{n/2}}\frac{2^{n-2m}}{(n-2m)!m!(m-1)!}\\
&\quad\times	\left[\sum_{l=0}^{s}\binom{2s}{2l}\left(\frac{m}{4^{r-1}}-\frac{n}{4^r}\right)^{2s-2l}\avg{\left(S_{r.m}-\frac{m}{4^{r-1}}\right)^{2l}}\right.\\
&\qquad	\left.+\sum_{l=0}^{s-1}\binom{2s}{2l+1}\left(\frac{m}{4^{r-1}}-\frac{n}{4^r}\right)^{2s-2l-1}\avg{\left(S_{r,m}-\frac{m}{4^{r-1}}\right)^{2l+1}}\right]\\
&\sim\frac{n!(n-1)!(n-2)!}{(2n-2)!}\sum_{m=1}^{\floor{n/2}}\frac{2^{n-2m}}{(n-2m)!m!(m-1)!}\\
&\quad\times	\left[\sum_{l=0}^{s}\binom{2s}{2l}\left(\frac{1}{4^{r-1}}\right)^{2s-2l}\left(m-\frac{n}{4}\right)^{2s-2l}\frac{(2l-1)!!}{4^{2l(r-1)}}\left(\frac{4^{r-1}-1}{3}\right)^l m^l\right.\\
&\qquad	\left.+\sum_{l=0}^{s-1}\binom{2s}{2l+1}\left(\frac{1}{4^{r-1}}\right)^{2s-2l-1}\left(m-\frac{n}{4}\right)^{2s-2l-1}\frac{(2l+1)!!}{2\cdot4^{(2l+1)(r-1)}}\left(\frac{4^{r-1}-1}{3}\right)^{l+1}m^l\right]\\
&=\sum_{l=0}^s\binom{2s}{2l}\left(\frac{1}{4^{r-1}}\right)^{2s-2l}\frac{(2l-1)!!}{4^{2l(r-1)}}\left(\frac{4^{r-1}-1}{3}\right)^l\avg{\left(S_{2,n}-\frac{n}{4}\right)^{2s-2l}S_{2,n}^l}\\
&\quad+\sum_{l=0}^{s-1}\binom{2s}{2l+1}\left(\frac{1}{4^{r-1}}\right)^{2s-2l-1}\frac{(2l+1)!!}{2\cdot4^{(2l+1)(r-1)}}\left(\frac{4^{r-1}-1}{3}\right)^{l+1}\avg{\left(S_{2,n}-\frac{n}{4}\right)^{2s-2l-1}S_{2,n}^l}.
\displaybreak[1]
\end{align*}
From Lemma~\ref{lem4}, the first summation is $O(n^s)$, while the second summation is $O(n^{s-1})$.
Thus, we can neglect the second sum, so that
\begin{align*}
\avg{\left(S_{r+1,n}-\frac{n}{4^r}\right)^{2s}}
&\sim \sum_{l=0}^s\binom{2s}{2l}\left(\frac{1}{4^{r-1}}\right)^{2s-2l}\frac{(2l-1)!!}{4^{2l(r-1)}}\left(\frac{4^{r-1}-1}{3}\right)^l \frac{1}{4^l}\frac{(2s-2l-1)!!}{4^{2s-2l}}n^s\\
&=\frac{(2s)!n^s}{s!2^s 4^{2sr}}\sum_{l=0}^s \binom{s}{l} \left(\frac{4^r-4}{3}\right)^l\\
&=\frac{(2s-1)!!n^s}{4^{2sr}}\left(1+\frac{4^r-4}{3}\right)^s\\
&=\frac{(2s-1)!!n^s}{4^{2sr}}\left(\frac{4^r-1}{3}\right)^s.
\end{align*}

Case 2: $k$ is odd ($k=2s+1$).
\begin{align*}
\avg{\left(S_{r+1,n}-\frac{n}{4^r}\right)^{2s+1}}
&=\frac{n!(n-1)!(n-2)!}{(2n-2)!}\sum_{m=1}^{\floor{n/2}}\frac{2^{n-2m}}{(n-2m)!m!(m-1)!}\\
&\quad\times	\left[\sum_{l=0}^{s}\binom{2s+1}{2l}\left(\frac{m}{4^{r-1}}-\frac{n}{4^r}\right)^{2s+1-2l}\avg{\left(S_{r.m}-\frac{m}{4^{r-1}}\right)^{2l}}\right.\\
&\qquad	\left.+\sum_{l=0}^{s}\binom{2s+1}{2l+1}\left(\frac{m}{4^{r-1}}-\frac{n}{4^r}\right)^{2s-2l}\avg{\left(S_{r,m}-\frac{m}{4^{r-1}}\right)^{2l+1}}\right]\displaybreak[1]\\
&\sim\frac{n!(n-1)!(n-2)!}{(2n-2)!}\sum_{m=1}^{\floor{n/2}}\frac{2^{n-2m}}{(n-2m)!m!(m-1)!}\\
&\quad\times	\left[\sum_{l=0}^{s}\binom{2s+1}{2l}\left(\frac{1}{4^{r-1}}\right)^{2s+1-2l}\left(m-\frac{n}{4}\right)^{2s+1-2l}\frac{(2l-1)!!}{4^{2l(r-1)}}\left(\frac{4^{r-1}-1}{3}\right)^l m^l\right.\\
&\qquad	\left.+\sum_{l=0}^{s}\binom{2s+1}{2l+1}\left(\frac{1}{4^{r-1}}\right)^{2s-2l}\left(m-\frac{n}{4}\right)^{2s-2l}\frac{(2l+1)!!}{2\cdot4^{(2l+1)(r-1)}}\left(\frac{4^{r-1}-1}{3}\right)^l\right.\\
&\qquad\left.\times\frac{1}{5}\left(\frac{4^r-1}{3}+\frac{4^{r-2}-1}{3}4(2l+1)\right)m^l\right]\displaybreak[1]\\
&=\sum_{l=0}^s\binom{2s+1}{2l}\left(\frac{1}{4^{r-1}}\right)^{2s+1-2l}\frac{(2l-1)!!}{4^{2l(r-1)}}\left(\frac{4^{r-1}-1}{3}\right)^l\avg{\left(S_{2,n}-\frac{n}{4}\right)^{2s+1-2l}S_{2,n}^l}\\
&\quad+\sum_{l=0}^s\binom{2s+1}{2l+1}\left(\frac{1}{4^{r-1}}\right)^{2s-2l}\frac{(2l+1)!!}{2\cdot4^{(2l+1)(r-1)}}\left(\frac{4^{r-1}-1}{3}\right)^l\\
&\qquad\times\frac{1}{5}\left(\frac{4^r-1}{3}+\frac{4^{r-2}-1}{3}4(2l+1)\right)\avg{\left(S_{2,n}-\frac{n}{4}\right)^{2s-2l}S_{2,n}^l}.
\end{align*}
Both two sums are $O(n^{s-l})$, so we need to consider them.
By using Lemma~\ref{lem4},
\begin{align*}
\avg{\left(S_{r+1,n}-\frac{n}{4^r}\right)^{2s+1}}
&\sim\sum_{l=0}^s\binom{2s+1}{2l}\left(\frac{1}{4^{r-1}}\right)^{2s+1-2l}\frac{(2l-1)!!}{4^{2l(r-1)}}\left(\frac{4^{r-1}-1}{3}\right)^l\left(\frac{n}{4}\right)^l\frac{(2s-2l+1)!!}{2\cdot4^{2s-2l+1}}(2l+1)n^{s-l}\\
&\quad+\sum_{l=0}^s\binom{2s+1}{2l+1}\left(\frac{1}{4^{r-1}}\right)^{2s-2l}\frac{(2l+1)!!}{2\cdot4^{(2l+1)(r-1)}}\left(\frac{4^{r-1}-1}{3}\right)^l\\
&\qquad\times\frac{1}{5}\left(\frac{4^r-1}{3}+\frac{4^{r-2}-1}{3}4(2l+1)\right)\left(\frac{n}{4}\right)^l\frac{(2s-2l-1)!!}{4^{2s-2l}}n^{s-l}\\
&=\frac{(2s+1)!!n^s}{2\cdot4^{r(2s+1)}}\left[\sum_{l=0}^s\binom{s}{l}\left(\frac{4^r-4}{3}\right)^l(2l+1)+\frac{4}{5}\sum_{l=0}^s\binom{s}{l}\left(\frac{4^r-4}{3}\right)^l\left(\frac{4^r-1}{3}+\frac{4^{r-2}-1}{3}4(2l+1)\right)\right]\\
&=\frac{(2s+1)!!n^s}{2\cdot4^{r(2s+1)}}\frac{4^r-1}{15}\sum_{l=0}^s\binom{s}{l}\left(\frac{4^r-4}{3}\right)^l(2l+5)\\
&=\frac{(2s+1)!!n^s}{2\cdot4^{r(2s+1)}}\left(\frac{4^r-1}{3}\right)^s\frac{1}{5}\left(\frac{4^{r+1}-1}{3}+\frac{4^{r-1}-1}{3}4(2s+1)\right),
\end{align*}
where we have used
\[
\sum_{l=0}^s\binom{s}{l}\left(\frac{4^r-4}{3}\right)^l=\left(\frac{4^r-1}{3}\right)^s,\quad
\sum_{l=0}^s\binom{s}{l}l\left(\frac{4^r-4}{3}\right)^l=s\left(\frac{4^r-1}{3}\right)^{s-1}\frac{4^r-4}{3}.
\]
\end{proof} 

\section{Further generalization}\label{sec6}
Theorems~\ref{thm2} and \ref{thm3} are further generalized to the central limit theorem as follows.
\begin{thm}\label{thm4}
For $q,r=1,2,\ldots$,
\[
\sqrt{n}\left(\frac{S_{q+r,n}}{S_{q,n}}-\frac{1}{4^r}\right)\Rightarrow N\left(0,\frac{4^r-1}{3\cdot4^{2r-q+1}}\right)
\]
\end{thm}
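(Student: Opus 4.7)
The plan is to mirror the strategy used for Theorems~\ref{thm2} and \ref{thm3}: compute asymptotic even and odd moments of the centered quantity $S_{q+r,n}/S_{q,n} - 1/4^r$, then extract the limiting normal distribution via the characteristic function. The essential new ingredient is a Markov-like decomposition
\begin{equation*}
\avg{h(S_{q,n}, S_{q+r,n})} = \sum_{m} P_n(S_{q,n} = m)\, \avgn{m}{h(m, S_{r+1,m})},
\end{equation*}
valid for any function $h$. This formula encodes the self-similarity of Horton--Strahler ordering: conditional on $S_{q,n}=m$, the reduced tree obtained by contracting each order-$q$ branch to a single leaf is uniform on $\Omega_m$, and an order-$(q+r)$ branch of the original tree corresponds to an order-$(r+1)$ branch of the reduced tree. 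Formally the decomposition is established by iterating Eq.~\eqref{eq:1-1} $q-1$ times, since that equation is exactly the $q=2$ special case for functions of a single variable.

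A preliminary extension of Prop.~\ref{prop2} is also required: by iterating Eq.~\eqref{eq:1-1} with $f(x) = x^{-s}$ I would show, by induction on $q$,
\[
\avg{S_{q,n}^{-s}} \sim \left(\frac{n}{4^{q-1}}\right)^{-s},
\]
with Prop.~\ref{prop2} serving as the base case. With these tools in hand, applying the decomposition above with $h(a,b) = (b/a - 1/4^r)^{2s}$ and invoking Lemma~\ref{lem2} (divided by $m^{2s}$ to convert branch moments to ratio moments) yields
\begin{align*}
\avg{\left(\frac{S_{q+r,n}}{S_{q,n}} - \frac{1}{4^r}\right)^{2s}}
&\sim \frac{(2s-1)!!}{4^{2sr}}\left(\frac{4^r-1}{3}\right)^s \avg{S_{q,n}^{-s}}\\
&\sim (2s-1)!!\left(\frac{4^r-1}{3\cdot 4^{2r-q+1}}\right)^s n^{-s}.
\end{align*}
The same computation with the odd half of Lemma~\ref{lem2} shows the odd-power moment to be of order $n^{-s-1}$, one order smaller than the even-power moment.

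The remainder is routine: as in the proofs of Theorems~\ref{thm2} and \ref{thm3}, the odd-$k$ contributions to the characteristic function vanish in the limit (the $n^{-s-1}$ decay beats the $n^{s+1/2}$ growth), while the even-$k$ contributions telescope into
\[
\exp\left(-\frac{z^{2}}{2}\cdot\frac{4^{r}-1}{3\cdot 4^{2r-q+1}}\right),
\]
which is the characteristic function of $N(0,(4^r-1)/(3\cdot 4^{2r-q+1}))$. I expect the main obstacle to be the conditional decomposition itself: although it is morally a direct consequence of the same self-similarity that underlies Eqs.~\eqref{eq:1-1} and \eqref{eq:1-3}, those equations only address single variables or ratios of consecutive orders, so generalizing to a two-argument function $h(S_{q,n}, S_{q+r,n})$ calls either for a careful re-iteration of Eq.~\eqref{eq:1-1} that tracks both indices simultaneously, or for a direct structural argument identifying the conditional law of $S_{q+r,n}$ given $S_{q,n}=m$ with the law of $S_{r+1,m}$ on $\Omega_m$.
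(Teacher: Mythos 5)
Your proposal is correct and is in essence the paper's own argument, differing only in where the induction on $q$ is placed. The paper proves a moment lemma (Lemma~\ref{lem5}) by induction on $q$, applying the one-step recursion~\eqref{eq:1-3} together with Prop.~\ref{prop2} at each step so that a factor $4^{s}$ accumulates per step; you instead iterate the recursion once and for all into a $q$-step conditional decomposition, push the induction into the auxiliary estimate $\avg{S_{q,n}^{-s}}\sim(n/4^{q-1})^{-s}$ (a generalized Prop.~\ref{prop2}), and then obtain the ratio moments in a single application of Lemma~\ref{lem2}. The two organizations are equivalent: since $4^{s(q-1)}n^{-s}=(n/4^{q-1})^{-s}$, your generalized Prop.~\ref{prop2} is precisely the quantity the paper's induction builds up, and what each buys is only a matter of bookkeeping (yours reuses Lemma~\ref{lem2} verbatim; the paper's keeps every step inside the already-established formalism of~\eqref{eq:1-3}). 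The structural point you flag as the main obstacle --- that~\eqref{eq:1-1} and~\eqref{eq:1-3} are stated only for one-variable functions of a single order or of consecutive-order ratios, whereas the induction needs the conditional law of the pair $(S_{q,n},S_{q+r,n})$ given $S_{2,n}=m$ to coincide with that of $(S_{q-1,m},S_{q+r-1,m})$ on $\Omega_m$ --- is glossed over by the paper as well, which applies~\eqref{eq:1-3} to $f(S_{q+1+r,n}/S_{q+1,n})$ without comment; your explicit acknowledgment of this is not a gap relative to the paper. One further point in your favor: your odd-moment order $n^{-s-1}$ is the correct one (check $q=1$ by dividing the odd half of Lemma~\ref{lem2} by $n^{2s+1}$), whereas Lemma~\ref{lem5} as printed states $n^{-s}$, which must be a typo --- at that order the odd contributions $(iz\sqrt{n})^{2s+1}\avg{(\cdot)^{2s+1}}/(2s+1)!$ to the characteristic function would grow like $n^{1/2}$ instead of vanishing.
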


\begin{rem*}
This theorem is reduced to Theorem~\ref{thm2} when $r=1$ and to Theorem~\ref{thm3} when $q=1$; moreover, it is reduced to Theorem~\ref{thm1} when $r=q=1$.
\end{rem*}

\begin{lem}\label{lem5}
For $q,r=1,2,\ldots$ and $s=0,1,2,\ldots$,
\begin{align*}
&\avg{\left(\frac{S_{q+r,n}}{S_{q,n}}-\frac{1}{4^r}\right)^{2s}}\sim 4^{s(q-1)}\frac{(2s-1)!!}{4^{2sr}}\left(\frac{4^r-1}{3}\right)^sn^{-s},\\
&\avg{\left(\frac{S_{q+r,n}}{S_{q,n}}-\frac{1}{4^r}\right)^{2s+1}}\sim 4^{s(q-1)}\frac{(2s+1)!!}{4^{(2s+1)r}}\left(\frac{4^r-1}{3}\right)^s\frac{1}{5}\left(\frac{4^{r+1}-1}{3}+\frac{4^{r-1}-1}{3}4(2s+1)\right) n^{-s}.
\end{align*}
\end{lem}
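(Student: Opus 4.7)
The plan is to induct on $q$, with the case $q=1$ reducing directly to Lemma~\ref{lem2}. When $q=1$ the denominator $S_{1,n}=n$ is deterministic, so
\[
\avg{\left(\frac{S_{1+r,n}}{n}-\frac{1}{4^r}\right)^k}=\frac{1}{n^k}\avg{\left(S_{r+1,n}-\frac{n}{4^r}\right)^k},
\]
and dividing Lemma~\ref{lem2} by $n^{2s}$ or $n^{2s+1}$ yields exactly the $q=1$ instance of the lemma statement (since the prefactor $4^{s(q-1)}$ equals $1$).

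For the inductive step $q\ge 2$, I would invoke the natural two-index extension of Eq.~\eqref{eq:1-3},
\[
\avg{f\!\left(\frac{S_{q+r,n}}{S_{q,n}}\right)}=\frac{n!(n-1)!(n-2)!}{(2n-2)!}\sum_{m=1}^{\floor{n/2}}\frac{2^{n-2m}}{(n-2m)!m!(m-1)!}\avg{f\!\left(\frac{S_{q+r-1,m}}{S_{q-1,m}}\right)},
\]
which follows by exactly the regenerative decomposition already used for Eqs.~\eqref{eq:1-1} and \eqref{eq:1-3}: conditional on $S_{2,n}=m$, the contracted tree is uniform on $\Omega_m$ and every Horton-Strahler order drops by one, so $S_{q+r,n}=S_{q+r-1,m}$ and $S_{q,n}=S_{q-1,m}$ for $q\ge 2$. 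Taking $f(x)=(x-1/4^r)^k$ and applying the induction hypothesis to each inner average, the right-hand side becomes a constant multiple of $\avg{S_{2,n}^{-s}}$; Proposition~\ref{prop2} then supplies $\avg{S_{2,n}^{-s}}\sim(n/4)^{-s}=4^s n^{-s}$. Combining this with the hypothesis prefactor $4^{s(q-2)}$ produces exactly the desired $4^{s(q-1)}$, and the $(r,s)$-dependent constants (including the awkward $\frac{1}{5}(\frac{4^{r+1}-1}{3}+\frac{4^{r-1}-1}{3}4(2s+1))$ in the odd case) pass through unchanged, because $r$ is held fixed throughout the induction on $q$ and these factors depend only on $r$ and $s$.

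The main technical subtlety is the substitution of the large-$m$ asymptotic for $g_k(m):=\avg{(S_{q+r-1,m}/S_{q-1,m}-1/4^r)^k}$ inside the finite sum over $m\le\floor{n/2}$. Since the ratio is bounded, $g_k(m)$ is uniformly bounded in $m$, whereas the mass of $P_n(S_{2,n}=\cdot)$ concentrates at $m\sim n/4\to\infty$ (Theorem~\ref{thm1}), so the small-$m$ contributions are negligible—this is the same implicit step used throughout the proofs of Lemmas~\ref{lem1}, \ref{lem2}, and \ref{lem4}. Once this standard passage to the asymptotic is granted, the induction reduces to the algebraic identity $4^{s(q-2)}\cdot 4^s=4^{s(q-1)}$, with the $(r,s)$-constants from the hypothesis reproducing themselves verbatim.
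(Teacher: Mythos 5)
Your argument is essentially the paper's own proof: induction on $q$ with base case $q=1$ supplied by Lemma~\ref{lem2} (the paper cites Cor.~\ref{cor3}, which is only its $r=1$ instance), and the inductive step carried out by the order-shift recursion Eq.~\eqref{eq:1-3} pushing both indices down by one, followed by Prop.~\ref{prop2} in the form $\avg{S_{2,n}^{-s}}\sim 4^s n^{-s}$, which is exactly where the extra factor $4^s$ per step comes from --- if anything you are more careful than the paper, which neither writes out the two-index form of the recursion nor addresses the substitution of the large-$m$ asymptotics inside the sum over $m$. One caution: dividing the odd-power half of Lemma~\ref{lem2} by $n^{2s+1}$ yields a quantity $\sim \frac{(2s+1)!!}{2\cdot 4^{(2s+1)r}}(\cdots)\,n^{-s-1}$ rather than the $\frac{(2s+1)!!}{4^{(2s+1)r}}(\cdots)\,n^{-s}$ printed in Lemma~\ref{lem5}, so your base case does not match the stated lemma ``exactly'' as you claim; the discrepancy lies in the paper's statement (which is likewise inconsistent with Cor.~\ref{cor3} at $q=r=1$), and the form your induction actually propagates --- $O(n^{-s-1})$ with the factor $\tfrac12$ --- is the one needed for the odd terms to be negligible in the proof of Theorem~\ref{thm4}.
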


\begin{rem*}
In comparison with Cor.~$\ref{cor3}$, the effect of $q>1$ appears in the form of the factor $4^{s(q-1)}$.
\end{rem*}

\begin{proof}
By induction on $q$.
$q=1$ is equivalent to Cor~\ref{cor3}.

Assume that the statement is true for $q\ge1$, and we show that it is true for $q+1$.
Using Eq.~\eqref{eq:1-3} and Prop.~\ref{prop2},
\begin{align*}
\avg{\left(\frac{S_{q+1+r,n}}{S_{q+1,n}}-\frac{1}{4^r}\right)^{2s}}
&=\frac{n!(n-1)!(n-2)!}{(2n-2)!}\sum_{m=1}^{\floor{n/2}}\frac{2^{n-2m}}{(n-2m)!m!(m-1)!}\avg{\left(\frac{S_{q+r,n}}{S_{q,n}}-\frac{1}{4^r}\right)^{2s}}\\
&\sim\frac{n!(n-1)!(n-2)!}{(2n-2)!}\sum_{m=1}^{\floor{n/2}}\frac{2^{n-2m}}{(n-2m)!m!(m-1)!}4^{s(q-1)}\frac{(2s-1)!!}{4^{2sr}}\left(\frac{4^r-1}{3}\right)^s m^{-s}\\
&=4^{s(q-1)}\frac{(2s-1)!!}{4^{2sr}}\left(\frac{4^r-1}{3}\right)^s\avg{S_{2,n}^{-s}}\\
&\sim 4^{sq}\frac{(2s-1)!!}{4^{2sr}}\left(\frac{4^r-1}{3}\right)^s n^{-s}.
\end{align*}
Similarly,
\begin{align*}
\avg{\left(\frac{S_{q+1+r,n}}{S_{q+1,n}}-\frac{1}{4^r}\right)^{2s+1}}
\sim 4^{sq}\frac{(2s+1)!!}{4^{(2s+1)r}}\left(\frac{4^r-1}{3}\right)^s\frac{1}{5}\left(\frac{4^{r+1}-1}{3}+\frac{4^{r-1}-1}{3}4(2s+1)\right) n^{-s}.
\end{align*}
\end{proof}

\begin{proof}[Proof of Theorem~\ref{thm4}]
By using Lemma~\ref{lem5}, the characteristic function of the left-hand side, $\varphi_{q,n}^{q+r}(z)$, can be calculated as with Theorems~\ref{thm2} and \ref{thm3}:
\[
\varphi_{q,n}^{q+r}(z)\sim\exp\left(-\frac{1}{2}\frac{4^r-1}{3\cdot4^{2r-q+1}}z^2\right).
\]
\end{proof}


\begin{thebibliography}{10}
\bibitem{Ball} P. Ball, Branches (Oxford University Press, Oxford, 2011).
\bibitem{Knuth} D. E. Knuth, The Art of Computer Programming, vol.~3 (Addison Wesley, Reading, 1973).
\bibitem{Archibald} J. D. Archibald, Aristotle's Ladder, Darwin's Tree: The Evolution of Visual Metaphors for Biological Order (Columbia University Press, New York, 2014).
\bibitem{Horton} R. E. Horton, Geol. Soc. Am. Bull. 56, 275 (1945).
\bibitem{Strahler} A. N. Strahler, Trans. Am. Geophys. Un. 38, 913 (1957).
\bibitem{Shreve} R. L. Shreve, J. Geol. 75, 178 (1967).
\bibitem{Tokunaga} E. Tokunaga, Geogr. Rep. Tokyo Metrop. Univ. 13, 1 (1987).
\bibitem{Stanley} R. P. Stanley, Enumerative Combinatorics, vol.~2 (Cambridge University Press, Cambridge, 1999).
\bibitem{Yamamoto2010} K. Yamamoto, J. Stat. Phys. 139, 62 (2010).
\bibitem{Werner} C. Werner, Canadian Geographer 16, 50 (1972).
\bibitem{Yamamoto2008} K. Yamamoto, Phys. Rev. E 78, 021114 (2008).
\bibitem{Wang} S. X. Wang and E. C. Waymire, SIAM J. Discr. Math. 4, 575 (1991).
\bibitem{Feller} W. Feller, An Introduction to Probability Theory and Its Applications, vol.~2 (Wiley, New York, 1968).
\bibitem{Abramowitz} M. Abramowitz and I. A. Stegun, Handbook of Mathematical Functions (Dover, New York, 1972).
\end{thebibliography}
\end{document}